\newtheorem{thm}{Theorem}[section]
\newtheorem{lem}[thm]{Lemma}
\newtheorem{prop}[thm]{Proposition}
\numberwithin{equation}{section} 
\numberwithin{figure}{section} 
\theoremstyle{remark}
\newtheorem{rmk}[thm]{Remark}
\newtheorem*{acknowledgement*}{Acknowledgement}
\theoremstyle{definition}
\def\eps{\varepsilon}
\newcommand{\R}{\mathbb R}
\newcommand{\Rd}{\mathbb R^{d}}
\newcommand{\C}{\mathcal{C}}
\newcommand{\N}{\mathbb N}
\newcommand{\W}{\mathcal{W}}
\newcommand{\A}{\mathcal{A}}
\newcommand{\leb}{\mathrm{ Leb}}
\newcommand{\en}{\mathcal{H}}
\newcommand{\Pb}{\mathcal{P}}
\newcommand{\T}{\mathbb T}
\newcommand{\roi}{\rho ^i}
\newcommand{\roj}{\rho ^j}
\newcommand{\ui}{u ^i}
\newcommand{\dt}{\partial_t}
\DeclareMathOperator{\Div}{div}
\newcommand{\jc}{\mathcal J}
\newcommand{\bro}{\underline \rho}
\newcommand{\vij}{V^{i,j}}
\newcommand{\vji}{V^{j,i}}
\newcommand{\fc}{\mathcal F}
\newcommand{\kc}{\mathcal K}
\newcommand{\ec}{\mathcal E}
\newcommand{\ac}{\mathcal A}
\newcommand{\robf}{\bm{\rho}}
\newcommand{\ropc}{\underline \rho}
\newcommand{\roam}{\tilde \rho}
\newcommand{\vam}{\tilde v}
\newcommand{\mam}{\tilde m}
\DeclareMathOperator{\argmin}{argmin}
\newcommand{\dd}{\mathrm{d}}
\DeclareOldFontCommand{\bf}{\normalfont\bfseries}{\mathbf}
\newcommand{\TabFour}[1]{ %
\begin{tabular}{@{}c@{\hspace{1mm}}c@{\hspace{1mm}}c@{\hspace{1mm}}c@{\hspace{1mm}}}
 #1
\end{tabular}
}
\begin{document}

\title{A variational formulation of a Multi-Population Mean Field Games with non-local interactions}

\author{Luigi De Pascale\thanks{Dipartimento di Matematica ed Informatica,
Universit\'a di Firenze, Viale Morgagni, 67/a - 50134 Firenze, Italy.},\; 
Luca Nenna\thanks{Universit\'e Paris-Saclay, CNRS, Laboratoire de math\'ematiques d'Orsay, ParMA, Inria Saclay, 91405, Orsay, France. 
email: luca.nenna@universite-paris-saclay.fr}}

\maketitle

\begin{abstract}
We propose a MFG model with quadratic Hamiltonian involving $N$ populations. This results in a system of $N$ Hamilton-Jacobi-Bellman and $N$ Fokker-Planck equations with non-local interactions. As in the classical case we introduce an Eulerian variational formulation which, despite the non convexity of the interaction, still gives a weak solution to the MFG model. The problem can be reformulated in Lagrangian terms and solved numerically by a Sinkhorn-like scheme. We present numerical results based on this approach, these simulations exhibit different behaviors depending on the nature (repulsive or attractive) of the non-local interaction.

\end{abstract}

\vskip\baselineskip\noindent
\textit{Keywords.} Mean field games, optimal transport, multi-marginal optimal transport, entropic regularization, convex analysis.\\
\textit{2020 Mathematics Subject Classification.}  Primary: 49L25 ; Secondary: 49K40, 49N15, 65K10, 91A14.
\date{\today}

\section{Introduction}

The aim of this paper is to establish a variational formulation (both Lagrangian and Eulerian), as well as a suitable numerical methods, for quadratic second-order Mean Field Games which involve $N$ different populations interacting through a given non-local functional.  
Let $d$ be the dimension of the space; then for $i=1, \dots, N$ we consider the following coupled PDE system.
\begin{equation}\label{mfg}
\boxed{\left\{ \begin{array}{l}
-\dt \ui-\frac{1}{2} \Delta \ui +\frac{1}{2} |\nabla \ui|^2 = \sum_{j \neq i}\int_{\R^d} \bigg(\vij(x-y)+\vji(x-y)\bigg)\roj \dd y, \\
\dt \roi -\frac{1}{2} \Delta \roi -\Div (\nabla \ui \roi)=0,\\
\roi(0,x)=\roi_0(x), \ \ui(T,x)= g^i(x);\\
\end{array}\right.}
\end{equation}
where $\roi_0\in\mathcal{P}_{ac}(\R^d)$ and $g^i \in \C^0 (\R^d)$ and $\vij$ is a given positive and lower semi-continuous potential, the full set of assumptions is given at the end of this introduction.  In the paper, we will mostly use the more compact notation $\rho_t$ for $\rho(t,x)$.  Since $t\mapsto \rho_t$ will be an absolutely continuous curve with values in the space of probability measure equipped with a Wasserstein distance (see section 2) the initial datum $\rho_0$ will be assumed in a straightforward sense.\\
Mean-field games involving several populations have been studied recently by \cite{cirant2015multi,cirant2017bifurcation,cardaliaguet2021introduction,barilla2021mean,bensoussan2018mean,dayanikli2023multi,sun2022mean,meszaros2018variational}; for some more details about the theory of MFG we refer the reader to the seminal work by Lasry and Lions \cite{lasry2007mean}, the book \cite{carmona2018probabilistic} or the lecture notes by Cardaliaguet \cite{cardaliaguet2018short}. 
Notice that the interaction of the populations can be expressed via a more general functional, but we will not discuss here the extra difficulties, or via some Optimal Transport coupling as done in \cite{barilla2021mean}.
We will show that the above system can be seen as optimality conditions for the following \emph{Eulerian} variational problem .
\begin{equation*}
\boxed{\inf \left\{  \jc(\rho^1, v^1; \dots; \rho^N, v^N)    \ | \ \
\partial_t \roi_t - \frac12 \Delta \roi_t + \Div (\rho_t^iv_t^i) =0,\ \roi(0,x)=\roi_0,\forall i\right\}}
\end{equation*}
where

\begin{multline}
\label{eq:eulerian_1}
	\jc(\rho^1, v^1; \dots; \rho^N, v^N):= \sum_i \int _{[0,T]\times \Rd } \frac{|v^i|^2}{2} \dd \roi_t \dd t +\\ \sum_{\stackrel{1\leq j \leq N}{j \neq i }}   \int_0^T \int_{\R^d \times \R^d} \frac 12\bigg(\vij(x-y)+\vji(x-y)\bigg) \dd \roi_t \otimes \dd \roj_t \dd t + \int_{\R^d} g^i(x) \dd \rho^i (T,x).
\end{multline}

Moreover, we will also relate the above minimization problem with a \emph{Lagrangian} relative entropy minimization problem, that is

\begin{equation}
\boxed{\min\{ J(Q^1, \dots, Q^N) \ : \ e^0_\sharp Q^i= \rho_0^i \}}
\end{equation}
where 
\begin{eqnarray*}
J(Q^1, \dots, Q^N) &:= & \sum_i \en(Q^i | R^i) + \sum_{\stackrel{1 \leq i \leq N}{j \neq i}} \int_0^T \int_{\R^d \times \R^d}  \frac 12\bigg(\vij(x-y)+\vji(x-y)\bigg)\dd e^t_\sharp Q^i \otimes e^t_\sharp Q^j  \dd t \\ 
& +&  \sum_i 
\int_{\Omega} g^i(\omega(T)) \dd Q^i,
\end{eqnarray*}
where $Q^i\in\Pb(\Omega)$ with $\Omega=\C([0,T];\R^d)$, $R^i$ is a well-chosen reference measure which will be defined in Section 3 and $\en(P\,\vert\,R)$ is the Boltzmann-Shannon entropy, that is
 \[ \en(\gamma\,|\,\pi)=\int \rho\log\rho \dd \pi,\;\text{if}\; \gamma = \rho \pi.\]
When the reference measure $\pi$ in the entropy is non indicated, is intended the Lebesgue measure. 

We denote the $N$-uple by $\bro:= (\rho^1, \dots, \rho^N)$.

The existence and uniqueness of weak solutions for this system will be discussed mainly in the full space $\R^d$, but all of our results can be easily adapted to the periodic setting of $\T^d$. For $\R^d$ we will need some condition at $\infty$ then we will use the spaces:
\begin{enumerate}
    \item[(i)] $\C_0^2([0,T]\times \R^d)= \{\phi \in \C^2 ( [0,T]\times \R^d) \ : \ \lim_{|x|\to +\infty} \phi (t,x)=0, \ \forall t\}$; \item[(ii)] $\mathrm{Lip} _0 (\R ^d)=\{u \in \mathrm{Lip}(\R^d) \ : \ \lim_{|x|\to +\infty} u (x)=0\}.$
\end{enumerate}
We will make the following assumptions. 
\begin{enumerate}
	\item[(A1)]  $\roi_0 \in \mathrm{Lip} _0 (\R ^d)$ (or $\roi _0\in \mathrm{Lip}(\T^d)$);
	\item[(A2)] $\en(\rho^i_0) < +\infty$; 
	\item[(A3)]  $g^i \in \C_0^2 (\R^d)$,
 \item[(A4)] $V^{i,j}*\rho \in \C_b (\R^d),$ for all $\rho$ such that $\sqrt{\rho} \in H^1 (\R^d).$
\end{enumerate}
\begin{rmk}\label{expass}
Assumption (A4) above is always satisfied if $V^{i,j}\in \C_b (\Rd)$ but also, for example, if $V^{i,j}(x)= \frac{1}{|x|^\alpha}$ with  $\alpha$ such that $ \frac{1}{|x|^\alpha} \in L^q_{loc}$  for a $\frac{d}{2d-2}\leq q\leq \infty$. So, if $d=3$, the Coulomb cost is allowed. 
The condition $\sqrt{\rho} \in H^1 (\R^d)$ may look unnatural, however, 
since $\rho$ is a probability measure, it corresponds to the fact that the Fisher information $I(\rho)$ is finite. Lemma 4.1 of \cite{BenCarDimNen}
shows that this is always the case in this setting. The relevance of the condition $\sqrt{\rho} \in H^1 (\R^d)$ is also given by the fact that such $\rho$'s are the electron densities associated to wave functions in quantum mechanics \cite{lieb1983density} and, in this respect, the Coulomb type cost discussed above is also relevant (notice that in \cite{lasry2007mean} the authors linked the MFG system with Hartree type equation in which case the Coulomb interaction arises quite naturally). 

Also, assumption (A4) is related to Th. \ref{heat}. Choosing to use duality with different functions spaces would, of course, requires different assumptions. 
\end{rmk}
The aim of this work is twofold. On the one hand, we introduce a non-local interaction term, which is not only interesting for the applications (see for instance \cite{cirant2015multi,cirant2017bifurcation,cardaliaguet2021introduction,barilla2021mean,bensoussan2018mean,dayanikli2023multi,sun2022mean} and the references therein) but also introduces a slight non-convexity. On the other hand, we try to strip naked the structure of the problem so to be as accessible as possible to non-specialists and students.

 Compared to \cite{BenCarDimNen} we allow  different kinds of non-locality and non-convexity in the interaction term and, also, aside with the periodic setting,  we consider problems on the entire space $\R^d$. Working on the entire space requires some care in the use of Fenchel-Rockafellar duality since the duality between spaces of measures and spaces of continuous functions is different. 
 Notice also that in \cite{BenCarDimNen} the authors mainly focus on the equivalence between the Eulerian and the Lagrangian formulation whereas, here, we also prove the equivalence between the Euler-Lagrange equations of the minimization problem and the MFG system. 
The separate convexity, which we use to obtain the necessary conditions for optimality, also appears in \cite{briani2018stable}. Non-local interaction of the type considered here also appears, for example, in \cite{bernardini2023ergodic}. Here we show that this kind of non-local interaction can be studied also in the present variational setting. 

 The paper is organized as follows: Section \ref{sec:variational} is devoted to introduce the Eulerian variational formulation (in the same flavor as \cite{lasry2007mean,cardaliaguet2015second}) as well as the analysis of minimizers and the duality. Notice here that since the interaction term is slightly non-convex we have to introduce and study the linearized functional. In Section \ref{sec:entropic_problem} we introduce the Lagrangian variational formulation which, as in \cite{benamouetalentropic}, turns out to correspond to a minimization of a relative entropy. Section \ref{sec:gamma_conv} is devoted to the time discretization of the two formulations and the $\Gamma-$convergence of the discrete problem to the continuous one. The time discretization, as well as the linearization functional introduced in section \ref{sec:variational} are then useful in order to introduce a suitable numerical method in section \ref{sec:numerical_appx} based on generalized Sinkhorn algorithm.   

\section{Variational formulation}
\label{sec:variational}
Let  $\Pb_2=\left\{\mu \in \Pb \ | \  \mu << \leb^d, \ \int |x|^2 \dd\mu <+\infty\right\}$ the space of probability measure which are absolutely continuous with respect to the $d$-dimensional Lebesgue measure and have finite second moment (we recall that the ambient space is either $\R^d$ or $\T^d$). We will consider the metric space $(\Pb_2, \W_2)$  constituted by $\Pb_2$ equipped with the Wasserstein distance $\W_2$.  This is a length metric space (see, for example, \cite{AmbGigSav}) and absolutely continuous curve in this metric space will play a role. Since the functional \eqref{eq:eulerian_1} is not convex in the couple $(\roi_t,v^i)$ we have to introduce the momentum variable $m_t^i=\roi_t v_t^i$ and
re-write the functional in the following formulation  where the first term (i.e. the kinetic energy term) is now jointly convex in the two variables (we will see later that this makes the linearized functional convex)
\begin{multline}
	\jc(\rho^1, m^1; \dots; \rho^N,m^N):= \sum_i \int _{[0,T]\times \Rd } \frac{|m_t^i|^2}{2\roi_t} \dd x \dd t +\\ \sum_{\stackrel{1\leq j \leq N}{j \neq i }}   \int_0^T \int_{\R^d \times \R^d} \frac 12\bigg(\vij(x-y)+\vji(x-y)\bigg) \dd \roi_t \otimes\dd \roj_t \dd t + \int_{\R^d} g^i(x) \dd \rho^i (T,x);
\end{multline}
if $\rho^j \in \Pb_2$,  $m^i << \rho^i$, with a little abuse of notations, $m^i_t= \dfrac{\partial m_t^i}{\partial \rho_t^i}$ denotes the Radon-Nikodym derivative of measures, and $+\infty$, otherwise.
In the previous problem, then,  $v^i= \dfrac{\partial m^i}{\partial \rho^i}.$ 

We will be interested in 
\begin{equation}\label{variational}
\inf \left\{  \jc(\rho^1, m^1; \dots; \rho^N, m^N)    \ | \ \
\partial_t \roi_t - \frac12 \Delta \roi_t + \Div m_t^i =0,\;\rho^i(0,x)=\roi_0,\;\forall i=1,\cdots,N\right\}.
\end{equation}	 

Absolutely continuous curves in $\Pb_2$  i.e. absolutely continuous curve in the metric space $(\Pb_2,\W_2)$ are, by now, well characterized. 
We report here part of the characterization which may be read, for example, in Th. 5.14 of \cite{SanOTAM}.
For a curve $\gamma$ in a metric space $X$ we denote by $|\gamma'|$ the metric derivative of $\gamma$. 
\begin{thm} Let $\rho_t$ be an absolutely continuous curve in $\Pb_2 (\R^d)$ (namely, $\rho_t\in AC([0,T];\Pb_2(\R^d))$) then for a.e. $t \in [0,T]$, there exists a vector field $v_t \in L^2_{\rho_t}$ such that 
\begin{equation}\label{trasport}
\partial_t  \rho_t + \Div (v_t\rho_t) =0,
\end{equation}
and for a.e. $t$ we have $\|v_t\|_{L^2_{\rho_t}} \leq |\rho'|(t)$, where $|\rho'|(t)$ denotes the metric derivative of $\rho$. 
Conversely, if $\rho_t$ is a curve in $\Pb_2 (\R^d)$ and for a.e. $t \in [0,T]$ there exists $v_t \in L^2_{\rho_t} (\R^d, \R^d)$ such that \eqref{trasport} holds, 
then $\rho_t$ is an absolutely continuous curve in $\Pb_2$ and for a.e. $t$ we have $|\rho'|(t)\leq \|v_t\|_{L^2_{\rho_t}}.$
\end{thm}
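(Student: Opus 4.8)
The plan is to prove the two implications separately; together they are the Benamou--Brenier characterization, and I would follow the classical scheme of Ambrosio--Gigli--Savar\'e. For the direct implication, suppose $\rho_t$ is absolutely continuous, so that $\W_2(\rho_s,\rho_t)\le\int_s^t|\rho'|(\tau)\,\dd\tau$ with $|\rho'|\in L^1(0,T)$. Fixing $\phi\in\C^\infty_c(\Rd)$, I would first show that $t\mapsto\int\phi\,\dd\rho_t$ is absolutely continuous, hence differentiable a.e. Since every element of $\Pb_2$ is absolutely continuous with respect to Lebesgue measure, Brenier's theorem provides, for each pair $s<t$, an optimal map $T$ with $T_\sharp\rho_s=\rho_t$ and $\|T-\mathrm{id}\|_{L^2_{\rho_s}}=\W_2(\rho_s,\rho_t)$. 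Writing $\int\phi\,\dd\rho_t-\int\phi\,\dd\rho_s=\int(\phi\circ T-\phi)\,\dd\rho_s$, linearizing, and applying Cauchy--Schwarz yields the key estimate $\left|\frac{\dd}{\dd t}\int\phi\,\dd\rho_t\right|\le|\rho'|(t)\,\|\nabla\phi\|_{L^2_{\rho_t}}$ for a.e. $t$.

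This bound shows that, for a.e. $t$, the linear functional $L_t:\nabla\phi\mapsto\frac{\dd}{\dd t}\int\phi\,\dd\rho_t$ is well defined and continuous on the subspace $\{\nabla\phi:\phi\in\C^\infty_c\}$ of $L^2_{\rho_t}(\Rd;\Rd)$, with operator norm at most $|\rho'|(t)$. Extending $L_t$ to the $L^2_{\rho_t}$-closure of that subspace and invoking the Riesz representation theorem, I obtain a vector field $v_t$ in the closure with $\|v_t\|_{L^2_{\rho_t}}\le|\rho'|(t)$ and $L_t(\nabla\phi)=\int\nabla\phi\cdot v_t\,\dd\rho_t$; by construction $v_t$ satisfies \eqref{trasport} in the distributional sense. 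A measurable selection argument then guarantees that $t\mapsto v_t$ can be chosen measurable, which is needed for the statement to make sense.

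For the converse, assume \eqref{trasport} holds with $v_t\in L^2_{\rho_t}$. The whole point is to establish the single inequality $\W_2(\rho_s,\rho_t)\le\int_s^t\|v_\tau\|_{L^2_{\rho_\tau}}\,\dd\tau$, from which absolute continuity together with $|\rho'|(t)\le\|v_t\|_{L^2_{\rho_t}}$ follow immediately. I would regularize in space with a mollifier $\eta_\eps$, setting $\rho^\eps_t=\rho_t*\eta_\eps$ and $w^\eps_t=((v_t\rho_t)*\eta_\eps)/\rho^\eps_t$; the pair solves the continuity equation with a smooth, locally Lipschitz velocity, so the associated flow is well defined and produces an explicit transport plan between $\rho^\eps_s$ and $\rho^\eps_t$. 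Jensen's inequality gives $\|w^\eps_\tau\|_{L^2_{\rho^\eps_\tau}}\le\|v_\tau\|_{L^2_{\rho_\tau}}$, and estimating $\W_2$ by the length of the flow trajectories yields the inequality at level $\eps$. Sending $\eps\to0$, using narrow convergence together with uniform control of the second moments to upgrade to $\W_2$-convergence, passes the estimate to the limit.

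The delicate points are, in the direct implication, the rigorous linearization of $\phi\circ T-\phi$ and the measurable dependence $t\mapsto v_t$ after a representation carried out pointwise in $t$; and, in the converse, controlling the regularization so that the limit $\eps\to0$ preserves the sharp constant. I expect the convolution step to require the most care, since one must simultaneously verify the Jensen-type norm bound and the $\W_2$-convergence of the mollified curves.
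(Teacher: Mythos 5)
The paper does not actually prove this theorem: it is imported as a known result, with an explicit pointer to Th.~5.14 of \cite{SanOTAM} (equivalently, Chapter~8 of \cite{AmbGigSav}), so there is no internal argument to compare against. What you have written is, in substance, the classical Ambrosio--Gigli--Savar\'e proof of that cited result, and its architecture is correct: for the direct part, differentiate $t\mapsto\int\phi\,\dd\rho_t$, use optimal maps and a second-order Taylor bound $|\phi\circ T-\phi-\nabla\phi\cdot(T-\mathrm{id})|\le \tfrac12\,\mathrm{Lip}(\nabla\phi)\,|T-\mathrm{id}|^2$ (whose integral is $O(\W_2(\rho_s,\rho_t)^2)=o(|t-s|)$ at points where the metric derivative exists), then Cauchy--Schwarz and Riesz representation on the $L^2_{\rho_t}$-closure of test gradients; for the converse, mollify, apply the Jensen-type bound coming from joint convexity of $(a,b)\mapsto|a|^2/b$, transport along the flow of the regularized field, and pass to the limit using convergence of second moments. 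One remark: your appeal to Brenier maps is legitimate only because this paper's $\Pb_2$ is defined to contain measures with $\mu\ll\leb^d$; in the general form of the theorem one must replace maps by optimal plans, at no real extra cost.

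Three points in your sketch would need more care in a full write-up, and they are exactly the technical core of the cited proof. First, the a.e.~$t$ on which $t\mapsto\int\phi\,\dd\rho_t$ is differentiable and the bound holds depends on $\phi$; before invoking Riesz you must fix a countable family of test functions (dense in the appropriate sense) and intersect the corresponding full-measure sets, otherwise $L_t$ is not defined on the whole subspace for a.e.~$t$. Second, the ``measurable selection argument'' for $t\mapsto v_t$ is the weakest step as stated; the standard and cleaner fix is to carry out the Riesz representation once in space-time, on the closure of $\{\nabla_x\phi:\phi\in\C^\infty_c((0,T)\times\Rd)\}$ inside $L^2(\dd\rho_t\,\dd t)$, which yields a jointly measurable velocity field directly and makes the selection issue disappear. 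Third, in the converse, the flow of $w^\eps_t$ is not automatically well defined: the field is smooth in $x$ but one still needs the growth and local Lipschitz estimates, integrable in time (of the type $\sup_{B_R}|w^\eps_t|\in L^1(0,T)$, cf.~Lemma~8.1.10 of \cite{AmbGigSav}), to guarantee global existence of characteristics. None of these is a fatal flaw; with them filled in, your proof is the proof of the reference the paper cites.
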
 

If $\{\rho_t\}_{t \in [0,T]}$ is a curve of probability measures which solves 
\[\partial_t \rho_t - \frac{1}{2}\Delta \rho_t + \Div (v_t\rho_t)=0,  \]
with 
\[\int_0^T\int_{\R^d} |v_t|^2 \dd\rho_t \dd t < +\infty \]
and $\rho(0)=\rho_0$
then, by Lemma 4.1 of \cite{BenCarDimNen},  $\rho_t$ is absolutely continuous since for $w_t:= v_t - \frac12  \nabla \log \rho_t $ it holds 
\[\partial_t \rho - \Div (w_t \rho_t) =0,\]
and
\[\int _0^T \int |w_t|^2 \rho_t \dd x \dd t = \int _0^T \int |v_t|^2 \rho_t \dd x \dd t + \frac14 \int_0^T I(\rho_t) \dd t + \en (\rho(T)) - \en(\rho_0)<+\infty, \]
where
$I$ denotes the Fischer information, that is
\[I(\rho)=4||\nabla\sqrt{\rho}||^2_{L^2}\]
Actually Lemma 4.1 of \cite{BenCarDimNen} is richer and more detailed than we need and, for more, we refer the reader to the original paper.  We stress that the finiteness of the average kinetic energy associated to the curve $\rho_t$ and the vector field $v_t$ implies, by (iv) of Lemma 4.1 in \cite{BenCarDimNen}, the finiteness of the Fisher information and the entropy of $\rho(T,x)$.
In particular, if $(\rho_1, \dots, \rho_N)$ is admissible for problem \eqref{variational} then each component $\rho_i$ satisfies the assumptions above and so it is an absolutely continuous curve in $\Pb_2$. 

Existence of at least one minimiser of problem \eqref{variational}  will be proved in the next section.
It is well known that the first term of $\jc$ is not convex in the couple $(\rho^i,v^i)$.  However, as we have mentioned above, it can be rewritten as  
\[
\int_{[0,T]\times \Rd } f_2 (\roi_t, m^i_t) \dd x \dd t, 
\]
where  
\[
f_2(t,z) := \left\{
\begin{array}{ll}
	\frac{z^2}{2t} & \mbox{if} \ t> 0, \\
	0 & \mbox{if} 	\  t= 0, \mbox{and},\ z = 0.\\
	+ \infty & \mbox{otherwise.} 
\end{array}\right.\]
and the functional  $(\roi, m^i)\mapsto \int f_2 (\roi, m^i)$ is convex and lower semi-continuous (see, for example, Th. 5.18 of \cite{SanOTAM}). 

The second term is only separately convex in the $N$-tuple 
$(\rho^1, \dots, \rho^N)$ so, obtaining the optimality conditions goes through a directional linearisation process which will also be used in the following section.  

Let $(\overline \rho ^1, \overline m ^1; \dots ; \overline \rho ^N, \overline m ^N)$ be a minimiser of \eqref{variational}, define, for $i=1, \dots, N$
\begin{equation}\label{hi}
H^i(t,x)= \sum_{j \neq i} \int  (\vij (x-y)+\vji (x-y))  \dd\overline\rho ^j _t (y),
\end{equation}
and consider the functionals  
\[\jc^i(\rho, m) := \int_{[0,T]\times \Rd } \frac{|m_t|^2}{2\rho_t} \dd x \dd t + \int_{[0,T]\times \Rd } H^i \dd\rho_t \dd t + k^i + \int_{\R^d} g^i(x) \dd \rho^i (T,x), \]
where the constant is given by 
\begin{multline*}
	k^i := \sum_{j \neq i } \int_{[0,T]\times \Rd } \frac{|\overline m_t ^j|}{2\overline\rho_t ^j}^2\dd x \dd t  \\ + \int_{\R^d} g^j(x) \dd \overline \rho^j (T,x)+ \sum_{\stackrel{1\leq k \leq N}{j \neq k\neq i }}   \int_0^T \int_{\R^d \times \R^d} V^{j,k}( x-y) \dd \rho^k_t \otimes \dd\roj_t \dd t .
\end{multline*}	

The following proposition is slightly more than a remark. 
\begin{prop}\label{min1var}  For $i=1, \dots, N$, the couple $(\overline \rho ^i,\overline m^i)$ is a minimizer, among curves starting at $\rho_0^i$, of the functional 
	\[ \jc ^i (\rho,m) ,\] 
which is, moreover, a convex functional.
\end{prop}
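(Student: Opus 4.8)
The plan is to obtain both assertions directly from the minimality of the full $N$-uple, exploiting that admissibility for \eqref{variational} is decoupled across the index $i$: the constraint $\partial_t \roi - \frac12 \Delta \roi + \Div m^i = 0$, $\roi(0)=\roi_0$, couples $(\roi, m^i)$ only to itself. Consequently, freezing $(\overline\rho^j, \overline m^j)$ for every $j\neq i$ and letting the $i$-th pair range over curves starting at $\rho_0^i$ produces admissible competitors for \eqref{variational}.

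The first step is the algebraic identity $\jc(\overline\rho^1,\dots,\rho,\dots,\overline\rho^N) = \jc^i(\rho,m)$, valid when the $i$-th pair $(\rho, m)$ replaces $(\overline\rho^i,\overline m^i)$ and all other components stay frozen at the minimizer. Among the terms of $\jc$, only the kinetic term $\int \frac{|m|^2}{2\rho}$, the terminal term $\int g^i \, d\rho(T)$, and the interaction terms in which $i$ appears depend on the $i$-th component. Freezing the other marginals, the interaction terms in which $i$ is the first index contribute $\sum_{j\neq i}\int V^{i,j}(x-y)\,d\overline\rho^j_t(y)$ to the integrand, and those in which $i$ is the second index contribute $\sum_{j\neq i}\int V^{j,i}(x-y)\,d\overline\rho^j_t(y)$ (using the evenness of the potentials to rewrite the argument as $x-y$); their sum is exactly $H^i$ from \eqref{hi}. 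Everything else — the kinetic and terminal terms of the other populations together with the interactions between pairs avoiding $i$ — is independent of $(\rho, m)$ and constitutes the constant $k^i$.

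Granting this identity, minimality is immediate. For any admissible $(\rho, m)$ starting at $\rho_0^i$, replacing the $i$-th pair of the minimizer by $(\rho,m)$ keeps the $N$-uple admissible, so $\jc^i(\overline\rho^i,\overline m^i) = \jc(\overline\rho^1,\dots,\overline\rho^N) \leq \jc(\overline\rho^1,\dots,\rho,\dots,\overline\rho^N) = \jc^i(\rho,m)$. This is precisely how the non-convexity of $\jc$ is bypassed: although $\jc$ is only separately convex, a minimizer of the full problem is automatically a minimizer of each frozen one-population problem. Convexity of $\jc^i$ in $(\rho, m)$ then follows term by term: $\int f_2(\rho, m)$ is jointly convex and lower semicontinuous by Th. 5.18 of \cite{SanOTAM}; the term $\int H^i\, d\rho\, dt$ is linear in $\rho$ once $H^i$ is frozen (it is a fixed function of $(t,x)$, in $\C_b$ by assumption (A4), since each $\overline\rho^j_t$ has finite Fisher information); $\int g^i\, d\rho(T)$ is linear in $\rho$; and $k^i$ is a constant. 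A sum of a convex term, linear terms, and a constant is convex.

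I expect the only delicate point to be the bookkeeping in the linearization rather than any estimate: one must check that both appearances of the $i$-th marginal in the interaction sum (as first and as second index) are correctly gathered into the single potential $H^i$, and that freezing the remaining marginals turns the bilinear, non-convex coupling into a genuinely linear term. The finiteness and continuity of $H^i$, which make $\jc^i$ well defined and underlie the duality of the next step, are exactly what assumption (A4) supplies.
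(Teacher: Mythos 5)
Your proposal is correct and follows essentially the same route as the paper: the identity $\jc^i(\rho,m)=\jc(\overline\rho^1,\overline m^1;\dots;\rho,m;\dots;\overline\rho^N,\overline m^N)$ combined with the minimality of the full $N$-uple gives the first claim, and convexity follows because freezing the other components turns the bilinear interaction into a linear term. Your version merely spells out the bookkeeping (gathering the appearances of $\rho^i$ as first and second index into $H^i$, using evenness of the $V^{i,j}$) that the paper's one-line proof leaves implicit.
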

\begin{proof} We observe that 
\[\jc^i(\rho,m)=\jc(\overline \rho ^1,\overline m^1; \dots; \rho,m; \dots;\overline  \rho ^N,\overline  m ^N) \geq  \jc ( \overline \rho ^1,\overline m^1; \dots;\overline  \rho ^N,\overline  m ^N) = \jc^i(\overline \rho^i_t, \overline m ^i).\]
The convexity of $\jc^i$ is due to the separate convexity of the only term of $\jc$ which is not convex.
\end{proof}
We will then make a careful analysis of a functional of the type of $\jc^i$. 
\subsection{Analysis of the minimizers}
In this subsection, for an $H$ which has the properties as the one defined as in \eqref{hi}, we will write necessary conditions for the minimization of 
\[\boxed{\fc(\rho, m):= \frac{1}{2}\int_{[0,T]\times \Rd } \frac{|m_t|^2}{\rho_t} \dd x dt + \int_{[0,T]\times \Rd } H \dd\rho_t \dd t + \int_{\R^d} g(x) \dd \rho (T,x),} \]
among solutions of 
\[\left\{\begin{array}{l}
\partial_t \rho_t - \frac12 \Delta \rho_t + \Div m_t =0;\\
\rho(0,x)=\rho_0(x).
\end{array}\right.\]

As frequently in this settings, the convex analysis, via the Fenchel-Rockafellar duality, will be the main tool. 
In the next two lemmas we will write $\fc$ as the sum of two convex conjugates. Since $(\rho,m)$ is couple of finite measures, the natural space for duality is the space of bounded continuous functions $\C_b([0,T]\times \R^d) \times\C_b([0,T]\times \R^d; \R^d).$  In the following will denote by $\C^{1,2}_b([0,T]\times \R^d)$ \sout{$\C^2_b([0,T]\times \R^d)$} the set of functions which are $\C^1_b$ in time and $\C^2_b$ in space. 

\begin{lem} \label{dualK1}  Let $(\alpha,\beta)\in\C_b([0,T]\times \R^d) \times\C_b([0,T]\times \R^d; \R^d)$, $H$ defined as in \eqref{hi} and 
\[\kc_1 (\alpha, \beta):= \left \lbrace 
\begin{array}{ll}
0 & \mbox{if} \ 	 \alpha + \frac{|\beta|^2}{2} \leq H, \\
+\infty, & \mbox{otherwise.}
\end{array}
\right.
\]	
Then 
\[\kc_1^* (\rho,m)=\left \{ \begin{array}{ll}
	\frac12\int \frac{|m|^2}{\rho}+\int H \dd \rho & \mbox{if}\ m << \rho,\\
	+ \infty & \mbox{otherwise}.
\end{array}\right.\]
\end{lem}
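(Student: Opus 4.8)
The plan is to recognise $\kc_1$ as the indicator (in the convex-analytic $\{0,+\infty\}$ sense) of the closed convex set
\[
K := \Big\{ (\alpha,\beta) \in \C([0,T]\times\R^d)\times\C([0,T]\times\R^d;\R^d) \ : \ \alpha(t,x)+\tfrac{|\beta(t,x)|^2}{2} \leq H(t,x)\ \forall (t,x) \Big\},
\]
so that its conjugate, taken with respect to the duality between continuous functions and the finite (vector) measures $(\rho,m)$, is exactly the support function of $K$:
\[
\kc_1^*(\rho,m) = \sup_{(\alpha,\beta)\in K}\Big[ \int \alpha\,\dd\rho + \int \beta\cdot \dd m\Big].
\]
I would then prove the two inequalities separately, the heart of the matter being a pointwise Fenchel computation promoted to a statement about integral functionals of measures.

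For the upper bound I fix an arbitrary $(\alpha,\beta)\in K$ and assume first $m\ll\rho$, with density $v:=\dfrac{\dd m}{\dd\rho}\in L^2_\rho$. Using the pointwise constraint and completing the square, one has $\alpha + \beta\cdot v \leq H - \tfrac{|\beta|^2}{2} + \beta\cdot v \leq H + \tfrac{|v|^2}{2}$ $\rho$-a.e., whence
\[
\int \alpha\,\dd\rho + \int \beta\cdot\dd m \leq \int H\,\dd\rho + \int \tfrac{|v|^2}{2}\,\dd\rho.
\]
Taking the supremum over admissible pairs gives $\kc_1^*(\rho,m)\leq \int H\,\dd\rho + \int \tfrac{|m|^2}{2\rho}$.

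For the matching lower bound I would saturate the constraint, choosing $\beta$ close to the extremal field $v$ and $\alpha = H - \tfrac{|\beta|^2}{2}$, which is admissible by construction. The subtle point is that the optimal $\beta=v$ lives only in $L^2_\rho$, whereas $K$ demands continuous functions; here I would use that the relevant $H$ (of the form \eqref{hi}) is continuous and bounded by assumption (A4), and approximate $v$ in $L^2_\rho$ by continuous vector fields $\beta_n$, so that $\alpha_n = H-\tfrac{|\beta_n|^2}{2}$ is continuous and admissible. Passing to the limit recovers $\int H\,\dd\rho+\int\tfrac{|v|^2}{2}\,\dd\rho$ and closes the case $m\ll\rho$.

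It remains to check that the conjugate equals $+\infty$ when $m\not\ll\rho$. Writing the Lebesgue decomposition $m = v\rho + m^s$ with $m^s\perp\rho$, I would construct continuous fields $\beta_n$ concentrated on a shrinking neighbourhood of the support of $m^s$ on which $\rho$ carries vanishing mass, arranged so that $\int \beta_n\cdot\dd m^s\to+\infty$ while the penalty $\int\tfrac{|\beta_n|^2}{2}\,\dd\rho$ stays bounded; this forces $\kc_1^*(\rho,m)=+\infty$. I expect this singular case, together with the continuity-constrained approximation in the lower bound, to be the main obstacle, since both require reconciling the continuity requirement on $(\alpha,\beta)$ with the interplay between $\rho$ and the singular part of $m$. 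Alternatively, the whole statement can be deduced from the known duality for the (positively $1$-homogeneous) Benamou–Brenier integral functional of measures, e.g.\ Th.\ 5.18 of \cite{SanOTAM}, applied after the affine shift by $H$.
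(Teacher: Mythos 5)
Your proposal is correct in substance, but it takes a genuinely different route from the paper: the paper's entire proof of Lemma \ref{dualK1} is the one-line citation ``This is part of Prop.\ 5.18 in \cite{SanOTAM}'', i.e.\ exactly the alternative you relegate to your final sentence (the known conjugacy of the Benamou--Brenier functional, composed with the affine shift by $H$). What you develop instead is a self-contained proof of that proposition: upper bound via the pointwise Young inequality $\alpha+\beta\cdot v\le H-\tfrac{|\beta|^2}{2}+\beta\cdot v\le H+\tfrac{|v|^2}{2}$, lower bound by saturating $\alpha=H-\tfrac{|\beta|^2}{2}$ with continuous $\beta_n\to v=\tfrac{\dd m}{\dd\rho}$ in $L^2_\rho$ (where your observation that $H$ must be continuous and bounded, guaranteed by (A1), (A4) and Remark \ref{expass}, is precisely why the paper can apply the cited result in its setting), and the singular case by fields blowing up on an $|m^s|$-heavy, $\rho$-null set. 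Your route buys self-containedness, in line with the paper's stated aim of being accessible to non-specialists, at the cost of length; the paper's route is shorter but opaque. To make your argument complete you should tie up three loose ends: (i) the conjugate is computed over \emph{signed} measures, and it equals $+\infty$ also when $\rho$ fails to be nonnegative (take $\beta=0$ and $\alpha\to-\infty$ on the negative part); your upper bound integrates a pointwise inequality against $\rho$, which already presupposes $\rho\ge 0$; (ii) when $m\ll\rho$ but $v\notin L^2_\rho$, a truncation $\beta_n\approx v\,\chi_{\{|v|\le n\}}$ is needed to show the supremum is $+\infty$, matching the right-hand side; (iii) the value you obtain is $\int\tfrac{|m|^2}{2\rho}+\int H\,\dd\rho$, whereas the Lemma's display lacks the factor $\tfrac12$ --- this is a typo in the paper (the functional $\jc$ and Prop.\ 5.18 of \cite{SanOTAM} both carry the $\tfrac12$), so your computation, not the displayed formula, is the one consistent with the rest of the paper.
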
 
\begin{proof} This is part of  Prop. 5.18 in \cite{SanOTAM}.
\end{proof}
\begin{lem}\label{dualK2}
Let $(\alpha,\beta)\in\C_b([0,T]\times \R^d) \times\C_b([0,T]\times \R^d; \R^d)$ and 
\[\kc_2 (\alpha, \beta)= \left\lbrace\begin{array}{ll}
	\int \phi(0,x)\rho_0 \dd x	& \mbox{if} \ \alpha = -\partial_t \phi -\frac{1}{2}\Delta \phi \ \mbox{and}\ \beta= -\nabla \phi,\\
	+\infty & \mbox{otherwise};
\end{array}	\right. \]
for some $\phi \in \C_b ^{1,2}([0,T]\times \R^d),$ such that $\phi(T,x) \geq -g(x).$
Then 
\[\kc_2 ^*(\rho, m)= \left\lbrace \begin{array}{ll}
	\int g(x) \rho(T,x)\dd x	& \mbox{if} \ \  \partial_t \rho -\frac{1}{2}\Delta \rho + \Div m =0,\ \rho(0)=\rho_0\\
	+\infty & \mbox{otherwise};
\end{array}	\right.\]	

\end{lem}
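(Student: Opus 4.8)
The plan is to compute the Legendre--Fenchel conjugate directly from its definition and read off the two regimes (constraint satisfied / violated) from a single integration by parts. By definition,
\[
\kc_2^*(\rho,m)=\sup_{(\alpha,\beta)}\Big\{\int\alpha\,\dd\rho+\int\beta\cdot\dd m-\kc_2(\alpha,\beta)\Big\},
\]
and since $\kc_2$ is $+\infty$ unless $(\alpha,\beta)=(-\dt\phi-\Delta\phi,\,-\nabla\phi)$ for some admissible $\phi$, the supremum collapses onto $\{\phi\in\C_0^2([0,T]\times\R^d):\phi(T,\cdot)\geq -g\}$:
\[
\kc_2^*(\rho,m)=\sup_{\substack{\phi\in\C_0^2\\ \phi(T,\cdot)\geq -g}}\Phi(\phi),\qquad \Phi(\phi):=\int(-\dt\phi-\Delta\phi)\,\dd\rho-\int\nabla\phi\cdot\dd m-\int_{\R^d}\phi(0,x)\rho_0\,\dd x .
\]
The statement is then equivalent to showing that $\sup_\phi\Phi(\phi)=\int g\,\rho(T,x)\,\dd x$ precisely when $(\rho,m)$ solves $\dt\rho-\Delta\rho+\Div m=0$, $\rho(0)=\rho_0$, and $+\infty$ otherwise.

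Suppose first the constraint holds. I would test the equation against an admissible $\phi$ and integrate by parts in space and time: the diffusion term moves $-\Delta\rho$ symmetrically onto $-\Delta\phi$, the divergence term moves $\Div m$ onto $-\nabla\phi$, and $\dt\rho$ produces the two time-boundary traces. Using $\rho(0)=\rho_0$ this yields the identity
\[
\Phi(\phi)=-\int_{\R^d}\phi(T,x)\,\rho(T,x)\,\dd x
\]
for \emph{every} admissible $\phi$. Since $\rho(T,\cdot)\geq 0$ and $\phi(T,\cdot)\geq -g$, we get $\Phi(\phi)\leq\int g\,\rho(T,x)\,\dd x$, and the one-sided terminal constraint is optimized pointwise by driving $\phi(T,\cdot)\downarrow -g$, which gives equality. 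This last step needs care, because $-g$ must be reached within $\C_0^2$ terminal data: I would approximate $-g$ by admissible smooth terminal profiles (mollifying and truncating, using $g\in\C^2$ from (A3)) and pass to the limit in $\int g\,\rho(T,x)\,\dd x$ by dominated convergence.

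Suppose now the constraint fails. The distributional reading of $\dt\rho-\Delta\rho+\Div m=0$, $\rho(0)=\rho_0$ is exactly $\Phi(\psi)=0$ for all $\psi\in\C_0^2$ with $\psi(T,\cdot)=0$; hence a violation means $\Phi(\psi)\neq 0$ for some such interior test function $\psi$. Fixing one admissible competitor $\phi_1$ and observing that $\phi_1+t\psi$ is admissible for every $t\in\R$ (its terminal trace is unchanged, so $(\phi_1+t\psi)(T,\cdot)=\phi_1(T,\cdot)\geq -g$), linearity gives $\Phi(\phi_1+t\psi)=\Phi(\phi_1)+t\,\Phi(\psi)\to+\infty$ for a suitable sign of $t$. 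Thus $\kc_2^*(\rho,m)=+\infty$, as claimed.

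The main obstacle is the functional-analytic bookkeeping at the time boundary: for a merely measure-valued pair $(\rho,m)$ one must first justify that the time marginals $\rho(0,\cdot)$ and $\rho(T,\cdot)$ are well defined, so that the boundary traces in the integration by parts are meaningful. This is legitimate here because, by the characterisation of absolutely continuous curves recalled above together with Lemma~4.1 of \cite{BenCarDimNen}, admissible $\rho$ are absolutely continuous curves in $\Pb_2$ with well-defined, continuous endpoints. The second delicate point is the existence of at least one admissible $\phi$ and the approximation of the terminal profile $-g$ inside $\C_0^2$: this is immediate in the periodic setting $\T^d$, where constants are admissible and no decay at infinity is required, but in the full space $\R^d$ it needs a truncation argument controlling $\phi$ and its derivatives at infinity.
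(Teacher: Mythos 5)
Your proof is correct and follows essentially the same route as the paper: write $\kc_2^*$ as a supremum over admissible $\phi$, integrate by parts, and then extract the two regimes (the paper's terse ``sup over suitable subsets'' is exactly your scaling argument $\phi_1+t\psi$ with $\psi(T,\cdot)=0$, and its final equality is your pointwise optimization $\phi(T,\cdot)\downarrow -g$). Your additional care about the terminal-trace approximation within $\C_0^2$ and the admissibility of competitors on $\R^d$ versus $\T^d$ fills in details the paper leaves implicit, but does not change the argument.
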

\begin{proof}
By the finiteness condition on $\kc_2$ we can write  
\begin{multline}\label{k2star}
	\kc_2^* (\rho, m)= \sup\{\langle -\partial_t \phi - \frac{1}{2}\Delta \phi , \rho \rangle  - \langle \nabla \phi , m \rangle - \int \phi (0,x) \rho_0(x) \dd x \ | \\
	  \phi \in \C_b ^{1,2}([0,T]\times \R^d),\ \phi(T,x) \geq -g(x). \}
	\end{multline}

Since we will need to integrate by parts, we build a suitable approximation of the admissible functions. To this aim we will use the fact that $\rho$, $m$ and $\rho_0$ are finite measures and then are tight. 

Let $\phi$ be admissible for \eqref{k2star}. To adjust the support we consider a smooth cut-off function $\chi_R $ such that $0 \leq \chi_R \leq 1$, $\chi_R \equiv 1 $ in $B(0,R)$ and  $\chi_R \equiv 0 $ in $\R^d \setminus B(0,2R)$, and such that the gradient and the Hessian of $\chi _R$ vanish, respectively, like $1/R$ and $1/R^2$.
We define $\phi_R:= \chi_R \phi$. $\phi_R$ satisfies the same regularity assumptions of $\phi$ but we can only guarantee $\phi(T,x) \geq -g(x)$ for $\|x\| \leq R$. By the tightness of $\rho$, $m$ and $\rho_0$, since $\phi_R$ is bounded with bounded derivatives and coincide with $\phi$  in $[0,T]\times B(0,R)$ as $R$ goes to $\infty$ the value of the functional converrges to the same value for $\phi$. Since $g \in \C^2_0$ we may define $\phi_R^\beta=\phi_R +\beta$
to obtain an admissible function for \eqref{k2star}. Since
\begin{multline*}
\langle -\partial_t \phi_R^\beta - \frac{1}{2}\Delta \phi_R^\beta , \rho \rangle  - \langle \nabla \phi_R^\beta , m \rangle - \int \phi_R^\beta (0,x) \rho_0(x) \dd x\\= \langle -\partial_t \phi_R - \frac{1}{2}\Delta \phi_R , \rho \rangle  - \langle \nabla \phi_R , m \rangle - \int \phi_R (0,x) \rho_0(x) \dd x -\beta,
\end{multline*}
We may let $R\to +\infty$ and $\beta \to 0$ in any order to obtain the corresponding value for $\phi$

Integrating by parts gives, 
\begin{multline*}
\langle \phi_R,  \partial_t \rho -\frac{1}{2}\Delta \rho + \Div m\rangle - \int \phi_R(T,x) \rho(T,x)  \dd x  + \int \phi_R(0,x) \rho (0,x) \dd x \\
-\int \phi _R (0,x) \dd \rho_0 (x) -\beta,
\end{multline*}
Letting first $R\to \infty$ and then $\beta \to 0$ we obtain for $\phi$, 
\begin{multline*}
\langle \phi,  \partial_t \rho -\frac{1}{2}\Delta \rho + \Div m\rangle - \int \phi(T,x) \rho(T,x)  \dd x + \int \phi(0,x) \rho (0,x) \dd x \\
-\int \phi  (0,x) \dd \rho_0 (x).
\end{multline*}
We consider, now,  the $\sup$ over suitable subsets of $\{ \phi \in \C_b ^{1,2}([0,T]\times \R^d),\ \phi(T,x) \geq -g(x) \}$. 
First, let $\hat\phi$ be admissible and such that
$\hat{\phi}(0,x)\equiv 0$ and $\hat{\phi}(T,x) >-g(x)$. For every $\varphi \in \C^\infty_c((0,T)\times \R^d))$ the function 
$\hat\phi + \varphi$ is admissible. Considering the $\sup$ over the (affine) space $\hat{\phi}+ \C^\infty_c$ gives that unless
\[  \partial_t \rho_t -\frac{1}{2}\Delta \rho_t + \Div m_t=0\]
in the sense of distributions the $\sup$ is $+\infty$  and then that unless 
\[\rho(0,x)=\rho_0\] 
the $\sup$ is $+\infty$. 
If these last two conditions are satisfied then 
\[\kc_2^* (\rho, m) = \int g(x) \rho(T,x) \dd x.\]

\end{proof}

By Fenchel-Rockafellar duality 
\begin{prop}\label{FenRock} Weak duality holds
\[\min \{\kc_1 ^*(\rho,m)+\kc_2 ^*(\rho,m)  \} \geq \sup \{-\kc_1 (\alpha, \beta)-\kc_2 (-\alpha, -\beta)  \}\]
\end{prop}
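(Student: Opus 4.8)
The plan is to apply the Fenchel--Rockafellar duality theorem to the pair of convex functionals $\kc_1^*$ and $\kc_2^*$ whose sum is exactly $\fc$ (by Lemmas \ref{dualK1} and \ref{dualK2}, noting that the sum of the two indicated conjugates reproduces the three terms of $\fc$ on admissible $(\rho,m)$). The abstract Fenchel--Rockafellar statement, applied in the dual pairing between the space of finite measures $\mathcal M([0,T]\times\R^d)\times\mathcal M([0,T]\times\R^d;\R^d)$ and its predual $\C([0,T]\times\R^d)\times\C([0,T]\times\R^d;\R^d)$, reads
\[
\inf_{(\alpha,\beta)}\bigl\{\kc_1(\alpha,\beta)+\kc_2(-\alpha,-\beta)\bigr\}
= \max_{(\rho,m)}\bigl\{-\kc_1^*(\rho,m)-\kc_2^*(\rho,m)\bigr\},
\]
and the statement to be proved is this identity rewritten with the roles exchanged, so that the primal minimization of $\kc_1^*+\kc_2^*$ equals the dual supremum of $-\kc_1-\kc_2$ evaluated at $(-\alpha,-\beta)$. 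I would therefore state the hypotheses of Fenchel--Rockafellar and verify them term by term.

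Concretely, I would first record that $\kc_1$ and $\kc_2$ are proper, convex, and lower semi-continuous on $\C([0,T]\times\R^d)\times\C([0,T]\times\R^d;\R^d)$: for $\kc_1$ this follows since its effective domain is the closed convex set $\{\alpha+\frac{|\beta|^2}{2}\le H\}$, and for $\kc_2$ convexity and lower semi-continuity come from the linear dependence on $\phi$ together with the closed affine constraint $\phi(T,\cdot)\ge -g$. The crucial hypothesis is a qualification (constraint-transversality) condition: one must exhibit a point at which one of the two functionals is finite and continuous, or equivalently verify that the two effective domains overlap with enough interior. The plan is to produce an explicit feasible pair $(\alpha,\beta)$ at which $\kc_2$ is finite (choose a smooth $\phi\in\C_0^2$ with $\phi(T,\cdot)\ge -g$, which is possible by assumption (A3) on $g$) and such that the strict inequality $\alpha+\frac{|\beta|^2}{2} < H$ holds, putting this point in the interior of the domain of $\kc_1$ where $\kc_1$ is continuous. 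Since $H$ arising from \eqref{hi} is, under assumption (A4), a bounded continuous function, one can arrange such strict feasibility by taking $\phi$ with sufficiently small derivatives.

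The main obstacle I anticipate is precisely this qualification step: one needs the continuity of $\kc_1$ at an admissible point, and this requires $H$ to be genuinely continuous and bounded so that the open condition $\alpha+\frac{|\beta|^2}{2}<H$ is nonempty in the sup-norm topology. This is where assumption (A4) enters decisively, guaranteeing $\vij*\roj\in\C_b$, hence $H\in\C_b([0,T]\times\R^d)$, so a uniform gap can be opened. A secondary technical point is that the duality is being set up on the noncompact domain $[0,T]\times\R^d$ with the space $\C_0^2$ of functions decaying at infinity; I would either pass to the torus $\T^d$ (where $\C([0,T]\times\T^d)$ is a genuine predual of the finite measures and the argument is cleanest) or handle the tightness/decay at infinity in the $\R^d$ case by the moment bounds coming from $\rho\in\Pb_2$. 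Once the qualification condition is in place, Fenchel--Rockafellar yields both the stated equality of values and the attainment of the minimum on the primal side, which is what the statement asserts by writing $\min$ rather than $\inf$.
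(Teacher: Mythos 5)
Your overall route is the same as the paper's: Proposition \ref{FenRock} is obtained there by direct appeal to Fenchel--Rockafellar (the paper in fact verifies nothing), and your sign bookkeeping is correct --- the abstract theorem applied with $F=\kc_1$ and $G=\kc_2(-\cdot)$ on the function side does yield the stated identity, with attainment on the measure side precisely because the continuity hypothesis is imposed on the function side. The genuine gap is in the one step you add beyond the paper, namely the constraint qualification. You claim that, since (A4) makes $H$ bounded and continuous, a pair $(\alpha,\beta)$ generated by a $\phi$ with small derivatives satisfies $\alpha+\frac{|\beta|^2}{2}\le H-\delta$ uniformly, and hence lies in the sup-norm interior of $\operatorname{dom}\kc_1$. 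Boundedness of $H$ is not the relevant property: what this argument needs is $\inf H>0$. Indeed, interiority forces a uniform gap (perturb $\alpha$ by $\frac{\delta}{2}$ times a cutoff supported in an arbitrarily large ball), while every point at which $\kc_2(-\alpha,-\beta)$ is finite comes from some $\phi\in\C_0^2$, so that $\alpha+\frac{|\beta|^2}{2}=\partial_t\phi+\Delta\phi+\frac{|\nabla\phi|^2}{2}$ vanishes at spatial infinity. For the Coulomb-type potentials that Remark \ref{expass} explicitly allows, $H^i(t,\cdot)=\sum_{j\neq i}(\vij+\vji)*\overline\rho^{\,j}_t$ also tends to $0$ as $|x|\to\infty$, so in the tail the required inequality reads $0\le-\delta$: there is no qualification point at all, not even in the algebraic core of $\operatorname{dom}\kc_1$. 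Your argument therefore works on the torus (where a positive l.s.c.\ $V$ has a positive minimum, hence $\inf H>0$, and one can take $\phi(t,x)=c(t-T)-\max|g|$ with $c<0$ as the interior point), or on $\Rd$ under the extra hypothesis $\inf \vij>0$, but it breaks down exactly in the singular-potential regime on $\Rd$ that the paper advertises.

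Two secondary points. First, the lower semicontinuity you assert for $\kc_2$ is neither needed (the version of Fenchel--Rockafellar giving attainment on the dual side requires only convexity of both functionals and continuity of one of them at a point where both are finite) nor clearly true: the domain of $\kc_2$ is the image of a set of $\C^2$ functions under $\phi\mapsto(-\partial_t\phi-\Delta\phi,-\nabla\phi)$, which is not closed under uniform convergence. Second, for $\operatorname{dom}\kc_2$ to be nonempty one needs some $\phi\in\C_0^2$ with $\phi(T,\cdot)\ge-g$, which on $\Rd$ implicitly requires $g$ to be essentially nonnegative at infinity; this defect you inherit from the paper rather than introduce. To close the actual gap one would need a genuinely different mechanism in the singular case --- for instance a duality theorem with a weaker qualification condition (Attouch--Brezis), or proving the identity first for $H+\eta$ or for truncated potentials and passing to the limit $\eta\downarrow0$. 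Your remark about handling $\R^d$ via the moment bounds of $\Pb_2$ does not address this: that concerns the identification of the dual space, not the existence of an interior point.
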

According to the expression of the functionals obtained above 
\begin{multline*}
	\min \{\kc_1 ^*(\rho,m)+\kc_2 ^*(\rho,m)  \} =\min \bigg\{  \frac12\int \frac{|m|^2}{\rho}+\int H \dd \rho + \int g(x) \rho(T,x) \dd x \\ \mbox{if}\ m<<\rho\ \mbox{and} \   \partial_t \rho -\frac{1}{2}\Delta \rho + \Div m=0,\}
\end{multline*}
which is our original problem. 
On the other hand the right-hand side is 
\begin{multline}\label{probmax}
\sup \bigg\{ \int \phi (0,x) \dd\rho_0 (x)  \ | \ \phi \in \C^{1,2}_b ([0,T]\times \R^d ), \\
-\partial_t \phi- \frac{1}{2}\Delta \phi + \frac{|\nabla \phi|^2}{2} \leq H, \ \phi(T,x) \leq  g(x). \bigg\}
\end{multline}

Problem \eqref{probmax} above may not have solution. To address this question we introduce 
\begin{multline*}
\mathcal{Z}=\{z \in \C_b ([0,T]\times \R^d) \ : \ z>0,\  \partial_t z- \frac{1}{2}\Delta z \geq- H(T-t,x) z \\ \mbox{in the sense of distributions,} \ z(0,x)\geq e^{-g}. \}
\end{multline*}
\[\A :=\{ \phi: [0,T] \times \R^d \to \R \ : \ z(t,x)=e^{-\phi(T-t,x)}\in \mathcal{Z}\}\]
the terminal condition $\phi (T,x) \leq  g(x)$ is encoded in the initial condition $z(0,x)\geq e^{-g(x)}.$
\begin{prop}
\label{prop:equivalence}
Since $H$ is continuous and bounded, the $\sup$ in \eqref{probmax} is equal to 
\begin{equation}\label{probmax2}
    \max_\A \int \phi (0,x) \dd\rho_0 (x).
\end{equation}
\end{prop} 
Since every admissible functions for \eqref{probmax}  belongs to $\A$, the inequality  \eqref{probmax} $\leq$ \eqref{probmax2} is immediate. We postpone the proof of the equivalence of the two problems and we turn
to the existence of a solution for \eqref{probmax2}, we have

\begin{thm}\label{heat}
There exists $\psi\in \A$ which is a distributional solution of 
\begin{equation}\label{hjreal}
\left\{\begin{array}{l}
	-\partial_t \psi- \frac{1}{2}\Delta \psi + \frac{|\nabla \psi|^2}{2} = H(t,x) ,\\
	\psi(T,x) = g(x)
\end{array}
\right.
\end{equation}
\mbox{and} \ 
\[
\int_{\mathbb R^d}\psi(0,x)\,d\rho_0(x)=\max_{\mathcal A}
\int_{\mathbb R^d}\phi(0,x)\,d\rho_0(x).
\]Then $\psi$ is a maximizer for problem \eqref{probmax2} above. Moreover $\psi \in \C_b ([0,T]\times \R^d)\cap L^\infty (0,T; W^{1,\infty}(\R^d))$.
\end{thm}
\begin{proof} Setting  $u(t,x):= \psi (T-t, x)$  \eqref{hjreal} is transformed in 
	\[
	\left\{\begin{array}{l}
		\partial_t u- \frac12 \Delta u + \frac{|\nabla u|^2}{2} = H(T-t,x) ,\\
		u(0,x) = g(x)
	\end{array}
	\right.
	\]
Then the Hopf-Cole transform $v(t,x)= e^{-u}$ gives the further simplification 
	\[
\left\{\begin{array}{l}
	\partial_t v- \frac12 \Delta v = -H(T-t,x) v ,\\
	v(0,x) = e^{-g(x)}
\end{array}
\right.
\]
So, setting $h(x,t):=-H(T-t,x)$ we can look at the solutions of the Heat equation with a zeroth order term.
By assumptions (A1), (A4), Remark \ref{expass} and Lemma 4.1 of \cite{BenCarDimNen}, $h$ is continuous and bounded. Since $g^i$ belongs to $\C^2_0 (\R^d)$ the initial datum $v(0,x)$ is bounded and Lipschitz.  The equation therefore admits a unique bounded mild solution $v$ which is represented via a semigroup and satisfies: 
\begin{itemize}
    \item $v \in \C_b ([0,T]\times \R^d);$
    \item $e^{-\|g\|_\infty-T\|H\|_\infty} \le v(t,x)\le e^{\|g\|_\infty+T\|H\|_\infty};$
    \item $v \in L^\infty (0,T; W^{1,\infty} (\R^d)).$
\end{itemize}
In particular, $0<c\le v(t,x)\le C.$ Reversing the path we set $u(t,x)=-\log v(t,x)$ and $\psi (t,x)= u(T-t,x)$.
Since \(v\) is Lipschitz in space
\[
\nabla u(t,x)= -\frac{\nabla v(t,x)}{v(t,x)}\ \mbox{and therefore} \ |\nabla u(t,x)|\le \frac{|\nabla v(t,x)|}{c}.
\]
Hence
\[
\|\nabla u\|_{L^\infty}
\le
c^{-1}\|\nabla v\|_{L^\infty}, \ \mbox{and then also}, \ \|\nabla\psi\|_{L^\infty((0,T)\times\mathbb R^d)}
\le c^{-1} \|\nabla v\|_{L^\infty((0,T)\times\mathbb R^d)}.
\]
By the regularity of the data, $v$ is a solution of the equation also in the distributional sense so that $\psi$ is a distributional solution 
of the HJ equation we are considering (notice that $\nabla \psi$ is, in particular, in $L^2_{loc}$), see, for example, Chapter 4 of \cite{lunardi2012analytic}. 
\end{proof}

\begin{proof}[Proof of prop. \ref{prop:equivalence}]
Let  $\psi$ be the maximizer of the relaxed dual problem \eqref{probmax2}. We need to find a sequence $\phi_n \in \C^{1,2}_b$ admissible for problem \eqref{probmax}, equibounded and such that $\phi_n(0,x) \to \psi (0,x)$ locally uniformly. Since $\rho_0$ is a probability measure (and the is tight) 
this will imply $\int \phi_n(0,x)\dd \rho_0\to \int \psi(0,x)\dd \rho_0 $ and then the equality of the values. 
We will actually produce a sequence that converges stronger than this so that we can use the same sequence in the next Theorem. 

 Consider the affine function $a(t)=-M-C(T-t)$ with $M$ large enough and $C>\|H\|_{\infty}$ so that  $a(t)$ satisfies the HJB inequality, $a(T)\leq g$ and $a\leq \psi$.
 We first extend $\psi$ in time setting $\psi (t,x)= \psi (0,x)\ \mbox{for}\ t\leq 0;$ and $\psi (t,x)= \psi (T,x)-\tilde{c} (T-t)=g(x)-\tilde{c} (T-t)\ \mbox{for}\ T \leq t$ with $\tilde{c} >\|H\|_\infty+\|\Delta g\|_{\infty}+\|\nabla g\|^2_{\infty} $. 
Let \(\chi\in C_c^\infty(\mathbb R^d)\) be such that
\[
0\le \chi\le1,\qquad
\chi\equiv1\hbox{ on }B_1,\qquad
\chi\equiv0\hbox{ on }\mathbb R^d\setminus B_2,
\]
and set \(\chi_R(x):=\chi(x/R)\). Define
\[
\psi_R(t,x):=\chi_R(x)\psi(t,x)+(1-\chi_R(x))a(t).
\]
Then \(\psi_R=\psi\) on \(B_R\), \(\psi_R=a\) outside \(B_{2R}\), and
\[
\psi_R(T,\cdot)\le g .
\]
Moreover, since \(\psi\) and \(\nabla\psi\) are bounded, and since $\nabla\chi_R$ and $\Delta\chi_R$ are, respectively, of the order of $\frac1R$ and $\frac{1}{R^2}$,
a direct computation gives  
\[
-\partial_t\psi_R-\frac12\Delta\psi_R
+\frac12|\nabla\psi_R|^2\le H+\varepsilon_R
\quad\hbox{in }\mathcal D'((0,T)\times\mathbb R^d),
\qquad
\varepsilon_R\to0 .
\]
We now regularize \(\psi_R\) slightly outside \([0,T]\) , and let \(\eta_\delta\) one-sided-in-time and space mollifier (kernel supported in negative times, so $\psi_{R,\delta}(t)$ only samples times in $(t,t+\delta)\subset(0,T+\delta)$). Set
\[
\psi_{R,\delta}:=\eta_\delta*\psi_R, 
\]
we obtain
\[
-\partial_t\psi_{R,\delta}-\frac12\Delta\psi_{R,\delta}
+\frac12|\nabla\psi_{R,\delta}|^2
\le
\eta_\delta*\bigg(-\partial_t\psi_R-\frac12\Delta\psi_R
+\frac12|\nabla\psi_R|^2\bigg)
\le
\eta_\delta*H+\varepsilon_R .
\]
Since \(\psi_R=a\) outside \(B_{2R}\), the only region where the mollification is
nontrivial is contained, for \(\delta<1\), in a fixed compact set depending on
\(R\). On this compact set \(H\) is uniformly continuous then there exists a modulus
\(\omega_R(\delta)\to0\) as \(\delta\downarrow0\) such that
\[
\eta_\delta*H\le H+\omega_R(\delta).
\]
Therefore
\[
-\partial_t\psi_{R,\delta}-\frac12\Delta\psi_{R,\delta}
+\frac12|\nabla\psi_{R,\delta}|^2
\le H+\varepsilon_R+\omega_R(\delta).
\]
 Since
\(\psi_R(T,\cdot)\le g\), \(g\) is uniformly continuous, and
\(\psi_R(t,\cdot)\to\psi_R(T,\cdot)\) locally uniformly as \(t\uparrow T\), we
have
\[
\gamma_{R,\delta}:=
\left\|\bigl(\psi_{R,\delta}(T,\cdot)-g\bigr)_+\right\|_\infty
\longrightarrow0
\qquad\hbox{as }\delta\downarrow0 .
\]
Define
\[
\phi_{R,\delta}(t,x):=
\psi_{R,\delta}(t,x)
-\bigl(\varepsilon_R+\omega_R(\delta)\bigr)(T-t)
-\gamma_{R,\delta}.
\]
Then \(\phi_{R,\delta}\in C_b^{1,2}([0,T]\times\mathbb R^d)\) and
\[
-\partial_t\phi_{R,\delta}-\frac12\Delta\phi_{R,\delta}
+\frac12|\nabla\phi_{R,\delta}|^2\le H,
\qquad
\phi_{R,\delta}(T,\cdot)\le g.
\]
Thus, $\phi_{R,\delta}$ is a competitor  for \eqref{probmax} and in particular we have
\[\eqref{probmax} \ge \int_{\R^d}\phi_{R,\delta}(0,x)\dd\rho_0(x).\]
For fixed \(R\), letting \(\delta\downarrow0\) gives
\[
\liminf_{\delta\downarrow0}
\int_{\mathbb R^d}\phi_{R,\delta}(0,x)\,\dd\rho_0(x)
\ge
\int_{\mathbb R^d}\psi_R(0,x)\,\dd\rho_0(x)
-T\varepsilon_R .
\]
This construction gives the following convergences: For fixed $R$ 
\[\psi_{R,\delta} \to \psi_R, \]
locally uniformly on $[0,T]\times \R^d$ and 
\[\nabla \psi_{R,\delta} \to \nabla \psi_R, \]
in $L^p([0,T]\times \R^d)$ for $1\leq p<+\infty$. The same, then, holds for $\phi_{R,\delta}$. 
Since $\psi_R=\psi$ on $B(0,R)$, passing to a diagonal subsequence we obtain
\[\phi_n \to \psi\]
locally uniformly and 
\[\nabla \phi_n \to \nabla \psi\]
in $L^p_{loc}$ for $1 \leq p< +\infty.$ 
Moreover, since both $\|\phi_n\|_\infty, \ \|\nabla \phi_n \|_\infty \leq C$, for any finite (then tight) measure 
$\mu$ on $[0,T]\times \R^d$, 
\[\nabla \phi_n \to \nabla \psi\]
in $L^p _\mu$ for $1\leq p <+\infty$. This approximation will be used in the proof of the next theorem.
\end{proof}

We conclude the section connecting the duality argument with the MFG system\eqref{mfg}, 
\begin{thm} Let $(\rho^1,m^1,...,\rho^N,m^N)$ be a minimizer for \eqref{variational}, define 
$v^i= \dfrac{\partial m^i}{\partial \rho^i_t},$ and $+\infty$, otherwise and, for all $i=1, \dots, N$, let $u^i$ be a maximizer for \eqref{probmax} with $H=H^i$ (defined in equation \eqref{hi}), $\rho_0=\rho^i_0$ and $g=g^i$. Then $v^i= -\nabla u^i$ and $(\rho^1,\dots,\rho^N, u^1, \dots, u^N)$ is a solution of \eqref{mfg}.
\end{thm}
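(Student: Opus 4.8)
The plan is to fix a population index $i$ and exploit the per-population convexification already set up in the paper. By Proposition \ref{min1var} the component $(\roi,m^i)$ of the global minimizer minimizes the convex functional $\jc^i$, which is exactly $\fc$ with $H=H^i$, $\rho_0=\roi_0$ and $g=g^i$, the remaining components $\overline\rho^j$ ($j\neq i$) being frozen inside $H^i$. First I would record that, by Lemmas \ref{dualK1} and \ref{dualK2} together with Proposition \ref{FenRock}, strong duality holds between this problem and the dual \eqref{probmax}, and that Theorem \ref{heat} supplies a maximizer $\ui$ of \eqref{probmax} solving the Hamilton--Jacobi--Bellman equation with equality, $-\dt \ui-\Delta \ui+\frac{|\nabla \ui|^2}{2}=H^i$, together with the terminal condition $\ui(T,\cdot)=g^i$. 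Modulo the normalization of the diffusion constant and the identification of $H^i$ with the MFG coupling, this is precisely the first equation of \eqref{mfg}, so the only thing left to produce is the velocity relation $v^i=-\nabla \ui$ and the Fokker--Planck equation.

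The heart of the argument is extracting $v^i=-\nabla \ui$ from the coincidence of the primal and dual optimal values. Since a primal minimizer $(\roi,m^i)$ and a dual maximizer $\ui$ both exist and, by Proposition \ref{FenRock}, their values agree, the two Fenchel--Young inequalities
\[\kc_1^*(\roi,m^i)+\kc_1(\alpha,\beta)\geq\langle\roi,\alpha\rangle+\langle m^i,\beta\rangle,\qquad \kc_2^*(\roi,m^i)+\kc_2(-\alpha,-\beta)\geq-\langle\roi,\alpha\rangle-\langle m^i,\beta\rangle,\]
with $\alpha=-\dt \ui-\Delta \ui$ and $\beta=-\nabla \ui$, must in fact be equalities, since their sum is nonnegative yet forced to vanish by strong duality. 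I would then write out the $\kc_1$ equality explicitly. Here $\kc_1(\alpha,\beta)=0$ because the HJB constraint holds, and in fact with equality $\alpha=H^i-\frac{|\beta|^2}{2}$; substituting this and $m^i=\roi v^i$ into $\kc_1^*(\roi,m^i)=\int\frac{|m^i|^2}{2\roi}+\int H^i\,\dd\roi$, the terms involving $H^i$ cancel and the equality collapses to
\[\int_{[0,T]\times\Rd}\frac{\roi}{2}\,\bigl|v^i+\nabla \ui\bigr|^2\,\dd x\,\dd t=0,\]
which forces $v^i=-\nabla \ui$ $\roi$-a.e., as claimed.

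Finally I would close the loop. The admissibility constraint in \eqref{variational} gives $\dt\roi-\frac12\Delta\roi+\Div m^i=0$ with $\roi(0,\cdot)=\roi_0$; substituting $m^i=\roi v^i=-\roi\nabla \ui$ turns this into the Fokker--Planck equation for $\roi$, i.e.\ the second equation of \eqref{mfg}, while the terminal condition $\ui(T,\cdot)=g^i$ and initial condition $\roi(0,\cdot)=\roi_0$ are exactly the prescribed boundary data. Running this over all $i=1,\dots,N$ yields a solution $(\rho^1,\dots,\rho^N,u^1,\dots,u^N)$ of \eqref{mfg}.

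I expect the main obstacle to be the rigorous justification of the saturation step and of the formal computation above: one must argue that the equality case of Fenchel--Young is legitimate at the level of measures, where $v^i=m^i/\roi$ is only defined $\roi$-a.e.\ and the integration-by-parts pairing the $\C^2$ test function $\ui$ against the measures $\roi,m^i$ needs care, and that the constant $H^i$ built from the frozen $\overline\rho^j$ indeed coincides with the interaction coupling appearing in \eqref{mfg}. In particular the directional linearisation producing $H^i=\sum_{j\neq i}\int(\vij+\vji)\,\dd\overline\rho^j$ must be matched to the right-hand side of the HJB equation in \eqref{mfg}, which, depending on the symmetry of the potentials and on the sign and diffusion-constant conventions, may require identifying $\vij$ with $\vji$ or absorbing a factor into the normalization.
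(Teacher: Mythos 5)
Your proposal is correct and follows essentially the same route as the paper: reduction to the single-population convex problem via Proposition \ref{min1var}, strong duality from Proposition \ref{FenRock} with the dual maximizer supplied by Theorem \ref{heat}, and a complementary-slackness argument collapsing to $\int \frac{\rho^i}{2}\,|v^i+\nabla u^i|^2 \,\dd x\,\dd t = 0$, which is exactly the paper's concluding display. The only difference is organizational: the paper performs the slackness step by explicit integration by parts against the constraint PDE, whereas you package the same computation abstractly as equality in the two Fenchel--Young inequalities for $\kc_1$ and $\kc_2$.
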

\begin{proof}
Fix \(i\in\{1,\ldots,N\}\), and let \(u^i\) be a maximizer of the dual
problem \eqref{probmax} (we change the notation since we are back to the case of $N$ variables). By Theorem \ref{heat},
\[
-\partial_tu^i-\frac12\Delta u^i+\frac12|\nabla u^i|^2=H^i
\]
in the sense of distributions, with terminal condition
\(u^i(T,\cdot)=g^i\).

Moreover,
\[
u^i\in \mathcal C_b([0,T]\times\mathbb R^d),\qquad
\nabla u^i\in L^\infty((0,T)\times\mathbb R^d).
\]

Since the vector field \(-\nabla u^i\) is bounded, there exists a unique
distributional solution \(\tilde\rho^i\) of
\[
\partial_t\tilde\rho^i
-\frac12\Delta\tilde\rho^i
-\operatorname{div}(\tilde\rho^i\nabla u^i)=0,
\qquad
\tilde\rho^i(0)=\rho_0^i.
\]
Setting
\[
\tilde m^i:=-\nabla u^i\,\tilde\rho^i,
\]
the pair \((\tilde\rho^i,\tilde m^i)\) is admissible for the primal
problem.

Since \((\rho^i,m^i)\) is a minimizer, Proposition \ref{FenRock} gives
\begin{multline}
\label{slack2}
\int_0^T\!\!\int_{\mathbb R^d}
\frac{|\tilde m^i|^2}{2\tilde\rho^i}
+
\int_0^T\!\!\int_{\mathbb R^d}
H^i\,\dd\tilde\rho_t^i
+
\int_{\mathbb R^d}
g^i\,\dd\tilde\rho_T^i
\\
\ge
\int_0^T\!\!\int_{\mathbb R^d}
\frac{|m^i|^2}{2\rho^i}
+
\int_0^T\!\!\int_{\mathbb R^d}
H^i\,d\rho_t^i
+
\int_{\mathbb R^d}
g^i\,\dd\rho_T^i
\\
\ge
\int_{\mathbb R^d}
u^i(0,x)\,\dd\rho_0^i(x).
\end{multline}

Consider now the approximation \(u_n^i\in C_b^{1,2}\) of $u^i$ constructed in the previous proposition and, in particular consider the convergence of $\nabla u_n^i$ for $p=2$ and $\mu=\leb^1 \times \tilde\rho_t^i$. 

Applying the weak formulation of the equation satisfied by $(\tilde \rho_i, \tilde m _i)$ with test
function \(u_n^i\), we obtain
\begin{multline}
\int_{\mathbb R^d}
u_n^i(T)\,\dd\tilde\rho_T^i
-
\int_{\mathbb R^d}
u_n^i(0)\,\dd\rho_0^i
\\
=
\int_0^T\!\!\int_{\mathbb R^d}
\left(
\partial_tu_n^i
+\frac12\Delta u_n^i
\right)
\,\dd\tilde\rho_t^i
+
\int_0^T\!\!\int_{\mathbb R^d}
\nabla u_n^i\cdot \dd\tilde m_t^i .
\end{multline}

Passing to the limit as \(n\to\infty\), using the above convergences,
the identity
\[
\tilde m^i=-\nabla u^i\,\tilde\rho^i,
\]
and the Hamilton--Jacobi equation satisfied by \(u^i\), yields
\[
\int_0^T\!\!\int_{\mathbb R^d}
H^i\,\dd\tilde\rho_t^i
+
\int_{\mathbb R^d}
g^i\,\dd\tilde\rho_T^i
=
\int_{\mathbb R^d}
u^i(0)\,\dd\rho_0^i
-
\frac12
\int_0^T\!\!\int_{\mathbb R^d}
|\nabla u^i|^2\,\dd\tilde\rho_t^i.
\]

Also from $\tilde m^i=-\nabla u^i\,\tilde\rho^i,$
we have
\[
\frac{|\tilde m^i|^2}{2\tilde\rho^i}
=
\frac12|\nabla u^i|^2\,\tilde\rho^i,
\]
and therefore
\[
\int_0^T\!\!\int_{\mathbb R^d}
\frac{|\tilde m^i|^2}{2\tilde\rho^i}
+
\int_0^T\!\!\int_{\mathbb R^d}
H^i\,d\tilde\rho_t^i
+
\int_{\mathbb R^d}
g^i\,\dd\tilde\rho_T^i
=
\int_{\mathbb R^d}
u^i(0)\,\dd\rho_0^i.
\]

Combining this identity with \eqref{slack2}, all the inequalities become
equalities. Since the function
\[
(m,\rho)\longmapsto \frac{|m|^2}{2\rho}
\]
is strictly convex in \(m\), equality in Fenchel's inequality implies
\[
m^i=-\rho^i\nabla u^i,
\]
that is,
\[
v^i=\frac{\partial m^i}{\partial\rho^i}=-\nabla u^i.
\]
This concludes the proof.
\end{proof}

\section{The Entropic problem}
\label{sec:entropic_problem}
In this section we focus on a Lagrangian formulation to \eqref{variational} based on a minimization of a relative entropy. In particular by proving the existence of a solution to the Lagrangian formulation we deduce the existence also for the Eulerian one.  
Notice that the existence of a minimizer for the Eulerian formulation is usually obtained by means of Fenchel-Rockafellar theorem (on the $d-$dimensional torus), see, for instance, \cite{cardaliaguet2015second,cardaliaguet2015weak}. One usually exploits the convexity of the problem, but in this case the interaction term between the populations makes the functional non-convex. Here, we prove existence by using the relationship between the minimizers of the Lagrangian and Eulerian formulation. We don't know of any approach via direct methods of the Calculus of Variations to the Eulerian formulation.\\
Let $\Omega:= (\C([0,T], \R^d), \| \cdot \|_\infty)$ and let $R^i \in \Pb(\Omega) $  be Wiener measures defined by
$$R^i=\int_{\R^d} \roi_0 \delta_{x+B^i (t)(x)} \dd x$$ 
where the $B_i$ are the classical Brownian motions.
For $Q^i\in \Pb(\Omega)$ consider the following variational problem to which we will refer as Lagrangian
 
\begin{equation}\label{var}
\min\{ J(Q^1, \dots, Q^N) \ : \ e^0_\sharp Q^i= \rho_0^i \}
\end{equation}
where 
\begin{eqnarray*}
J(Q^1, \dots, Q^N) &:= & \sum_i \en(Q^i | R^i) + \sum_{\stackrel{1 \leq i \leq N}{j \neq i}} \int_0^T \int_{\R^d \times \R^d} \frac 12\bigg(\vij(x-y)+\vji(x-y)\bigg) \dd e^t_\sharp Q^i \otimes \dd e^t_\sharp Q^j  \dd t \\ 
& +&  \sum_i 
\int_{\R^d} g^i(x) \dd e^T_\sharp Q^i,
\end{eqnarray*}
where $e^t_\sharp :\Pb (\Omega) \to \Pb (\R^d)$ is the evaluation map at time $t$; furthermore, notice that for every $t\in [0,T]$, $e^t_\sharp$  is continuous for the narrow convergences.

\begin{rmk}\label{finitepoint} Taking $Q^i=R^i$ gives at least one point at which $J$ is finite. In fact, in that case, the entropy terms are equal to 
$0$. For the other terms, it holds:
\[\int_0^T \int_{\R^d \times \R^d} V(x-y) \dd e^t_\sharp R^i \otimes \dd e^t_\sharp R^j  \dd t = \int_0^T \int_{\R^d \times \R^d} V(x-y)  (p_t*\roi_0) (x) ( H_t *\roj_0 ) (y) \dd x \dd y   \dd t\]
where $H_t$ is the Gaussian kernel, that is
\[H_{t}(z):=\dfrac{1}{(2\pi t)^d}\exp{\bigg(-\dfrac{|z|^2}{2t}\bigg)},\;t>0,\; z\in\R^d.\]

This integral is finite by assumption (A4).
Analogously, 
\[ \int_{\R^d} g^i(x) \dd e^T_\sharp R^i=\int_{\R^d} g^i(x) (H_T*\roi_0)(x) \dd x , \]
which is also finite by the assumptions on $g^i$ and $\roi_0$.
\end{rmk}

\begin{thm} The minimum in \eqref{var} is finite and there exists a minimizer $(\overline{Q}^1, \dots, \overline{Q}^N)$.
\end{thm}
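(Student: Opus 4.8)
The plan is to run the direct method of the calculus of variations in $\Pb(\Omega)$ equipped with the narrow topology, exploiting the fact that the relative entropy has narrowly compact sublevel sets.

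First I would show the infimum is finite. For an upper bound, the natural competitor is $Q^i=R^i$: by construction $e^0_\sharp R^i=\rho_0^i$, so it is admissible, and $\en(R^i\,|\,R^i)=0$. The coupling terms are finite because the time-$t$ marginals $e^t_\sharp R^i$ are heat-flow densities with finite Fisher information (Lemma 4.1 of \cite{BenCarDimNen}), so by assumption $(A4)$ each $\vij * e^t_\sharp R^j$ is bounded and continuous and the double integral over $\R^d\times\R^d$ is finite, while the terminal terms are finite since $g^i\in\C^2$ and the marginals have finite second moment; integrating in $t$ over $[0,T]$ keeps everything finite. For a lower bound I would use that $\en(\cdot\,|\,R^i)\geq 0$, that $V\geq 0$ by hypothesis, and that $g^i$ is bounded below (automatic on $\T^d$ by compactness; in $\R^d$ either by the standing assumptions or by dominating the terminal term with the entropy through a transport--entropy inequality). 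This simultaneously shows $\inf J>-\infty$ and, more importantly, that along a minimizing sequence the entropies are uniformly controlled: $\sum_i\en(Q^i_n\,|\,R^i)\leq J(Q^1_n,\dots,Q^N_n)+C\leq C'$.

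Next I would extract a convergent subsequence. Fixing a minimizing sequence $(Q^1_n,\dots,Q^N_n)$, the uniform entropy bound places each $Q^i_n$ in a fixed sublevel set of $Q\mapsto\en(Q\,|\,R^i)$. Since $R^i$ is a fixed probability measure on the Polish space $\Omega$, these sublevel sets are narrowly compact (equivalently, bounded relative entropy forces tightness), so by Prokhorov's theorem there is a subsequence with $Q^i_n\to\overline Q^i$ narrowly for every $i$. The admissibility constraint passes to the limit because $e^0_\sharp$ is narrowly continuous, whence $e^0_\sharp\overline Q^i=\lim_n e^0_\sharp Q^i_n=\rho_0^i$.

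Finally I would establish lower semicontinuity of $J$ term by term along this subsequence. The entropy term is narrowly lower semicontinuous, which is classical. For the coupling term, the narrow continuity of $e^t_\sharp$ gives $e^t_\sharp Q^i_n\to e^t_\sharp\overline Q^i$ narrowly for each fixed $t$, hence the products $e^t_\sharp Q^i_n\otimes e^t_\sharp Q^j_n$ converge narrowly; since $V$ is lower semicontinuous and nonnegative the inner integral is lower semicontinuous in the pair of measures, and an application of Fatou's lemma in the $t$ variable yields lower semicontinuity of the whole time integral. I want to stress that the bilinear (and hence non-convex) structure of this term is harmless here, because narrow convergence is stable under taking products; convexity is not needed for semicontinuity. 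The terminal term is lower semicontinuous because $g^i$ is continuous and bounded below. Combining the three pieces gives $J(\overline Q^1,\dots,\overline Q^N)\leq\liminf_n J(Q^1_n,\dots,Q^N_n)=\inf$, so $(\overline Q^1,\dots,\overline Q^N)$ is the desired minimizer. The main obstacle I anticipate is the compactness step: verifying the narrow compactness of the entropy sublevels and, in the non-compact setting $\R^d$, securing the lower bounds on the $V$- and $g$-terms that keep the entropies bounded along the minimizing sequence; once tightness is in hand, the passage to the limit is routine.
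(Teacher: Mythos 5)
Your proposal is correct and follows essentially the same route as the paper's proof: the direct method with $Q^i=R^i$ as the finiteness competitor, a uniform entropy bound along the minimizing sequence (using that $V$ is bounded below), compactness from the entropy sublevel sets, and term-by-term lower semicontinuity of $J$. The only cosmetic difference is that you invoke tightness/Prokhorov for the compactness step while the paper routes it through the De la Vall\'ee Poussin and Dunford--Pettis theorems (weak $L^1$ compactness of the densities, as detailed in its appendix); these are interchangeable here, and your treatment of the coupling term (narrow convergence of product measures plus Fatou in $t$) simply spells out what the paper asserts more briefly.
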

\begin{proof} 
By Remark \ref{finitepoint} above the infimum of $J$ is finite.  
Let $\{(Q^1_n, \dots,Q^N_n)\}_{n \in \N}$ be a minimizing sequence, so that $J(Q^1_n, \dots, Q^N_n) \leq C$. 
Since $V$ is bounded from below we have
\begin{eqnarray*}
\sum_i \en(Q^i_n | R^i) + {\bm k} +  \sum_i  \int_{\Omega} g^i(\omega(T)) \dd Q^i_n 
+ \max_\varphi  \{ \sum_i \int _{\R^d}  \varphi (0,x) \dd \roi_0 - \int_{\Omega} \varphi (0, \omega(0)) \dd Q^i_n \}\\
\leq \sum_i \en(Q^i | R^i) + \sum_{\stackrel{1 \leq i,j \leq N}{j \neq i}} \int_0^T \int_{\R^d \times \R^d} \frac12\bigg(\vij(x-y)+\vji(x-y)\bigg) \dd e^t_\sharp Q^i \otimes e^t_\sharp Q^j  \dd t \\ 
+  \sum_i  \int_{\Omega} g^i(\omega(T)) \dd Q^i + \max_\varphi  \{ \sum_i \int _{\R^d}  \varphi (0,x) \dd \roi_0 - \int_{\Omega} \varphi (0, \omega(0)) \dd Q^i \}\\
\leq C,
\end{eqnarray*}
 where we remind that $\Omega$ is the set of continuous paths $\C([0,T],\Rd)$.
The above inequality implies, since the other addenda of the first term have linear growth 
$$ \en(Q^i_n | R^i) \leq C_1, \ \ \forall i. $$ 
Thus (see appendix), up to subsequences, there exists $Q^i$ with 
$$ Q^i_n \stackrel{*}{\rightharpoonup} Q^i$$ 
and since all the terms constituting $J$ are lower semicontinuous, $(Q^1, \dots, Q^N)$ is a minimizer. 

The first term of $J$ is the entropy which is lower-semicontinuous (see Lemma 9.4.3 of \cite{AmbGigSav} and the appendix), the second and the third terms are given by continuous functionals composed with 
the continuous operator $Q \mapsto e^t_\sharp Q$ finally the last term is the $\sup$ of continuous functionals and l.s.c. as such. 
\end{proof}

From now on, we denote $Q^i_t=e^t_\sharp Q^i$ $\forall t\in[0,T]$.

The next theorem from \cite{BenCarDimNen} relates the Lagrangian problem \eqref{var} to the  Eulerian formulation  of system \eqref{mfg} that we discussed in the previous section. This will allow us to obtain the existence of a minimizer $(\overline{\rho} ^1, \overline{m}^1 \dots, \overline{\rho} ^N, \overline{m}^N)$ for problem \eqref{variational}. Since only the first term of the energy in \eqref{variational} depends on $m$,  
given a curve $\rho \in \C([0,T]; (\Pb_2, \mathcal W _2)) $ we define 
\[\ec(\rho):= \inf_v \left\{\frac12 \int_0^T\int_{\Rd} |v_t|^2 \dd\rho_t \dd t \ : \ \partial_t \rho_t - \frac12 \Delta \rho_t + \Div (v_t \rho_t)=0\right\},\]
and then, we decompose the minimization \eqref{variational} in a two steps minimization writing it as 
\begin{multline}\label{twosteps}
	\inf \bigg\{  \sum_i \ec(\rho^i_t)+ \sum_{\stackrel{1\leq j\leq N}{j \neq i }}   \int_0^T \int_{\R^d \times \R^d} \frac 12\bigg(\vij(x-y)+\vji(x-y)\bigg) \dd \roi_t \otimes\dd \roj_t \dd t \\ + \int_{\R^d} g^i(x) \dd \rho^i (T,x) \ : \ \rho^i \in \C( [0,T]; (\Pb_2, \mathcal W _2)),\;\rho^i(0,x)=\roi_0,\;\forall i=1,\cdots,N\bigg\}.
\end{multline}
This last problem only depends on the curve $\rho$ since the role of $m$ (or $v$) has already been encoded in $\ec$. 
In an analogous way we can decompose also the minimization problem \eqref{var} as
\begin{multline}\label{twosteps_entropic}
	\inf \bigg\{  \sum_i \mathcal S(\rho^i)+ \sum_{\stackrel{1\leq j\leq N}{j \neq i }}   \int_0^T \int_{\R^d \times \R^d} \frac 12\bigg(\vij(x-y)+\vji(x-y)\bigg) \dd \roi_t \otimes\dd \roj_t \dd t \\ + \int_{\R^d} g^i(x) \dd \rho^i (T,x) \ : \ \rho^i \in \C( [0,T]; (\Pb_2, \mathcal W _2)),\;\rho^i(0,x)=\roi_0,\;\forall i=1,\cdots,N\bigg\},
\end{multline}
where
\begin{equation}
	\label{EntCost}
	\mathcal S(\mu^i)=\inf\{ \mathcal H(Q^i|R^i)\;|\;Q^i_t=\mu^i_t,\;\forall t\in [0,T], \}
\end{equation}
so obtaining a problem which depends only on the curve $\mu^i$. 
Then using the following result of \cite{BenCarDimNen} we have the existence of a minimizer of \eqref{variational} from that of a minimizer of \eqref{var}.
\begin{prop}[see corollary 4.7 of \cite{BenCarDimNen}]\label{equiva} If $\rho_0 \in \Pb_2$ and $\mathcal H(\rho_0)< +\infty$ then 
\[ \mathcal S (\rho)= \ec (\rho) + \mathcal H (\rho_0).\]
\end{prop}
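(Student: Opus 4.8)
The plan is to establish the identity by proving the two inequalities $\mathcal S(\rho) \ge \ec(\rho) + \en(\rho_0)$ and $\mathcal S(\rho) \le \ec(\rho) + \en(\rho_0)$ separately, the engine in both directions being Girsanov's theorem combined with the disintegration (chain rule) of the relative entropy at the initial time. The extra term $\en(\rho_0)$ is produced precisely by this initial disintegration against the reversible normalization of the reference (the one whose time-marginals are Lebesgue, as in \cite{BenCarDimNen}, which differs from $R^i$ only by the constant $\en(\rho_0^i)$). Throughout, the hypotheses $\rho_0 \in \Pb_2$ and $\en(\rho_0) < +\infty$, together with Lemma 4.1 of \cite{BenCarDimNen} guaranteeing that admissible curves have finite Fisher information and are absolutely continuous, are exactly what keeps every term in the decomposition finite and the associated drifts and diffusions well defined.

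For the lower bound I would take any competitor $Q$ with $e^t_\sharp Q = \rho_t$ for all $t$ and $\en(Q|R) < +\infty$. Disintegrating $Q$ and $R$ with respect to the initial position $\omega(0)$ gives $\en(Q|R) = \en(Q_0|R_0) + \int \en(Q^x|R^x)\,dQ_0(x)$, where $Q^x,R^x$ are the laws conditioned on $\omega(0)=x$; in the reversible normalization the first term equals $\en(\rho_0)$ and is independent of $Q$. Girsanov's theorem represents $Q$ as the law of a diffusion $dX_t = b_t\,dt + dB_t$ and identifies the conditional term with $\tfrac12\,\mathbb{E}_Q\big[\int_0^T |b_t|^2\,dt\big]$. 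Passing to the conditional drift $\bar b_t(x) := \mathbb{E}_Q[b_t \mid X_t = x]$ via Jensen's inequality only decreases this energy, and a mimicking (Gy\"ongy-type) argument shows $\bar b$ satisfies $\partial_t \rho - \tfrac12\Delta\rho + \Div(\bar b\,\rho) = 0$, hence is admissible for $\ec(\rho)$. Therefore $\en(Q|R) \ge \en(\rho_0) + \tfrac12\int_0^T\!\int|\bar b|^2\,d\rho_t\,dt \ge \en(\rho_0) + \ec(\rho)$, and taking the infimum over $Q$ yields the inequality.

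For the upper bound I would run the construction in reverse: fix $v$ that is $\eps$-optimal for $\ec(\rho)$, so that $\partial_t\rho - \tfrac12\Delta\rho + \Div(v\rho)=0$ and $\tfrac12\int\!\int|v|^2\,d\rho_t\,dt \le \ec(\rho) + \eps$, and let $Q$ be the law of the diffusion with drift $v$ and initial datum $\rho_0$. The superposition principle and uniqueness for the Fokker--Planck equation ensure that the time-marginals of $Q$ are exactly $\rho_t$, so $Q$ is admissible in \eqref{EntCost}; Girsanov then gives $\en(Q|R) = \en(\rho_0) + \tfrac12\int\!\int|v|^2\,d\rho_t\,dt \le \en(\rho_0) + \ec(\rho) + \eps$, and letting $\eps \to 0$ closes the estimate.

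I expect the main obstacle to be the rigorous justification of the Girsanov representation and of the entropy splitting for merely admissible curves: verifying the integrability of the drift so that the stochastic exponential is a true density and the diffusion is well posed, and, on the upper bound side, the construction of the Markov process realizing the prescribed marginals (superposition à la Ambrosio--Trevisan together with Fokker--Planck well-posedness). This is precisely the machinery assembled in \cite{BenCarDimNen}, so within the present paper it is enough to invoke their Corollary 4.7, the finiteness assumptions in the statement being exactly those required to control each term of the decomposition.
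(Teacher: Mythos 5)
The paper offers no proof of this proposition at all: it is imported verbatim as Corollary 4.7 of \cite{BenCarDimNen}, which is precisely the citation you fall back on in your closing paragraph. Your sketch --- the entropy chain rule at time zero combined with Girsanov for the lower bound, superposition plus Girsanov for the upper bound, with the reversible normalization of the reference measure correctly identified as the source of the additive constant $\en(\rho_0)$ --- is a faithful reconstruction of how the cited result is actually proved, so your proposal is correct and consistent with the paper's approach of deferring to that corollary.
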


\begin{thm}\label{existence} There exists minimizers of problems \eqref{twosteps} and \eqref{variational}.
\end{thm}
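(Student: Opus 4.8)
The plan is to deduce existence for the two Eulerian problems \eqref{twosteps} and \eqref{variational} from the already established existence of a minimizer $(\overline{Q}^1,\dots,\overline{Q}^N)$ for the Lagrangian problem \eqref{var}, using the two decomposition identities together with Proposition \ref{equiva} as the bridge.

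First I would make the two-step decomposition of \eqref{var} rigorous, i.e. show that \eqref{var} and \eqref{twosteps_entropic} have the same value and that their minimizers correspond through the marginal flow $\mu^i_t=e^t_\sharp Q^i$. This is a partial-minimization (tower) argument: in $J$ the interaction term and the terminal term depend on the $Q^i$ only through the marginals $e^t_\sharp Q^i$, so for a fixed admissible family of curves $\mu^i$ one may first minimize $\sum_i \mathcal{H}(Q^i|R^i)$ over all $Q^i$ with prescribed time-marginals $Q^i_t=\mu^i_t$, which by definition \eqref{EntCost} equals $\sum_i \mathcal{S}(\mu^i)$, and then minimize over the curves $\mu^i$. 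Conversely, the optimizer $\overline{Q}^i$ of \eqref{var} produces the curve $\overline{\rho}^i_t:=e^t_\sharp\overline{Q}^i$, which is then optimal for \eqref{twosteps_entropic}, while $\overline{Q}^i$ itself realizes the inner infimum $\mathcal{S}(\overline{\rho}^i)$.

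Next I would pass from \eqref{twosteps_entropic} to \eqref{twosteps}. By Proposition \ref{equiva} together with assumption (A2) (so that $\mathcal{H}(\rho^i_0)<+\infty$), every admissible curve satisfies $\mathcal{S}(\rho^i)=\mathcal{E}(\rho^i)+\mathcal{H}(\rho^i_0)$, and since the constraint fixes the initial datum $\rho^i_0$, the sum $\sum_i\mathcal{H}(\rho^i_0)$ is a constant independent of the competitor. Hence the objective of \eqref{twosteps_entropic} differs from that of \eqref{twosteps} by this fixed constant, so the two problems have exactly the same minimizers (and values differing by $\sum_i\mathcal{H}(\rho^i_0)$). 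In particular the curves $\overline{\rho}^i$ obtained above minimize \eqref{twosteps}, giving existence for \eqref{twosteps}. To finish, I would lift these curves to a minimizer of the momentum formulation \eqref{variational}: since the minimum of \eqref{var} is finite, $\mathcal{E}(\overline{\rho}^i)=\mathcal{S}(\overline{\rho}^i)-\mathcal{H}(\rho^i_0)<+\infty$, so the set over which $\mathcal{E}(\overline{\rho}^i)$ is computed is nonempty; the functional $v\mapsto\frac12\int|v_t|^2\,\dd\overline{\rho}^i_t\,\dd t$ is convex, lower semicontinuous and coercive on the closed convex set of vector fields solving the continuity equation with datum $\overline{\rho}^i$, so the infimum is attained by some $\overline{v}^i$. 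Setting $\overline{m}^i:=\overline{\rho}^i\,\overline{v}^i$, and using that \eqref{variational} and \eqref{twosteps} coincide once the role of $m$ has been encoded in $\mathcal{E}$, the tuple $(\overline{\rho}^1,\overline{m}^1,\dots,\overline{\rho}^N,\overline{m}^N)$ minimizes \eqref{variational}.

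The main obstacle is making the partial-minimization step fully rigorous: one must check that no value is lost in the reduction of \eqref{var} to \eqref{twosteps_entropic}, i.e. that the optimal marginal curve is actually attained by the marginals of a genuine optimizer $\overline{Q}^i$, and likewise that the inner infimum defining $\mathcal{E}$ is attained rather than merely approached. The attainment in $\mathcal{E}$ rests on the direct method combined with the characterization of absolutely continuous curves in $(\Pb_2,\W_2)$ recalled at the beginning of this section; everything else is a bookkeeping consequence of Proposition \ref{equiva}.
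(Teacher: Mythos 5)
Your proposal is correct and follows essentially the same route as the paper: pass from the Lagrangian minimizer of \eqref{var} to the marginal-curve problem \eqref{twosteps_entropic}, identify \eqref{twosteps} via Proposition \ref{equiva} (the initial entropies being a fixed constant under the constraint), and then lift back to \eqref{variational} by selecting an optimal velocity field and setting $\overline{m}^i=\overline{v}^i\overline{\rho}^i$. Your explicit treatment of the two points the paper leaves implicit --- the partial-minimization (tower) argument relating \eqref{var} and \eqref{twosteps_entropic}, and the attainment of the inner infimum defining $\ec(\overline{\rho}^i)$ --- is a welcome tightening of the same argument rather than a different proof.
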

\begin{proof} If $\overline{Q}=(\overline{Q}^1,\dots,\overline{Q}^N)$ is a minimizer of problem \eqref{var}, the curve $\overline{Q}_t:=(\overline{\rho}^1,\dots,\overline{\rho}^N)$ is a minimizer of problem \eqref{twosteps_entropic}. By Proposition \ref{equiva} above, problems \eqref{twosteps} and \eqref{twosteps_entropic} have the same minimizers (while the minimal values differs by a constant). So $(\overline{\rho}^1,\dots,\overline{\rho}^N)$ is also a minimizer for problem \eqref{twosteps}. If we choose, then, for each $i\in \{1,\dots, N\}$, $v^i$ a vector field as in the definition of $\ec (\overline{\rho}^i)$ and consider $\overline{m}^i:= v^i\overline{\rho}^i$ we have that $(\overline{\rho}^1,\overline{m}^1, \dots,\overline{\rho}^N,\overline{m}^N)$ is a minimizer for problem \eqref{variational}.
\end{proof}

\section{Time discretization and $\Gamma$-convergence}
\label{sec:gamma_conv}

Before introducing a suitable discretizations we shortly recall the two equivalent problems we studied above. 
The main player in those problems is a vector curve of probability measures 
\[\rho \in (\C ([0,T]; (\Pb_2, \W_2) ))^N. \]
Given a positive integer $K$ the discrete version of the $\rho$ above is a $N$-tuple of $(K+1)$-vectors of probability measures 
\[\robf _K\in \Pb_2^{K+1} \times \dots \times  \Pb_2^{K+1} .  \]
So the $i$-th components $\robf^i_0,\cdots,\robf_K^i$ of $\robf^i_K$ is a discrete version of the $i$-th curve. 
To this $(K+1)-$tuple of probability measures we can associate the piece-wise constant curve 
\[\ropc^i(t)= \robf^i_j ,  \ \ \ \mbox{for}\ t \in \bigg[\frac{jT}{K}, \frac{(j+1)T}{K}\bigg).
\]
The ambient space for the Eulerian versions of the problems was 
\[\ac_0 = \{\rho:[0,T]\to (\Pb_2 )^N \ | \ \mbox{abs. cont. and s.t.}\  \rho(0)=\rho_0\}.\] 
We may first introduce the discretized space
\[\ac_0^K:= \{\robf_K  \in (\Pb_2 ^{(K+1)})^N \ : \ \robf_0^i=\rho^i_0 , \  i=1,\dots, N\}\]

We say that $\robf_ K \to \rho$ as $K\to +\infty$ if  for all $i=1, \dots, N$
\[ \sup_{t \in [0,T]} \W_2 (\ropc^i(t), \rho^i (t) ) \to 0.\] 

Given $N$ continuous curves of measures $\rho^i\in \mathcal C([0,T],(\mathcal P_2(\mathbb R^d),\mathcal W_2))$, that is $\rho^i:t\in [0,T]\mapsto \rho^i_{t}\in\mathcal P_2(\mathbb R^d)$, we defined
 the minimal energy  $\ec(\rho)$, the minimal entropic cost $\mathcal{S} (\rho)$, 
as well as the cost
\[ \mathcal F(\rho^1,\cdots,\rho^N)= \sum_{\stackrel{1 \leq i,j \leq N}{j \neq i}} \int_0^T \int_{\R^d \times \R^d} \frac 12\bigg(\vij(x-y)+\vji(x-y)\bigg) \dd \rho^i_t \otimes \dd\rho^j_t  \dd t \\ 
+  \sum_i  \int_{\R^d} g^i(x) \dd\rho^i(T,x).\]
Thus, the minimization problem \eqref{variational} and \eqref{var} can now be re-written in the following way

\begin{equation}\label{varEul}
\inf\{ \sum_{i=1}^N \ec(\rho^i)+\mathcal F(\rho^1,\cdots,\rho^N)\;|\; \rho \in \ac_0\}, 
\end{equation}
and
\begin{equation}
    \label{var2}
    \inf\{ \sum_{i=1}^N \mathcal S(\rho^i)+\mathcal F(\rho^1,\cdots,\rho^N)\;|\; \rho \in \ac_0\}.
\end{equation}


We define the time discretization of \eqref{varEul} as
\begin{equation}
	\label{varDiscEul}
	\inf\{\sum_{i=1}^N \mathcal E^K(\robf^i)+F^K(\robf^1,\cdots,\robf^N)\;|\; 
	\robf_K \in\mathcal T^K\},
\end{equation}
where
\[ \mathcal E^K(\robf^i):=\sum_{k=0}^{K-1}\mathcal E_{\frac{T}{K}}(\robf^i_k,\robf^i_{k+1}), \]
with 
\[\mathcal E_{\frac{T}{K}}(\mu,\nu):=\inf\{ \dfrac{1}{2}\int_0^{\frac{T}{K}}\int_{\mathbb R^d}|v_t|^2\dd\rho_t \dd t\;|\;\dt\rho-\dfrac{1}{2}\Delta\rho_t+\Div(\rho_t v_t)=0,\;\rho_0=\mu,\; \rho_{\frac{T}{K}}=\nu\}, \]
and 

\[ \mathcal T^K:=\{(\robf^1,\cdots,\robf^N)\in ((\mathcal P_2(\mathbb R^d),\mathcal W_2)^{(K+1)})^N\;|\;\robf_0^i=\rho_0^i,\;\forall i=1,\cdots,N \}. \]
In a similar way one can discretize in time the Lagrangian counterpart

\begin{equation}
    \label{EntDisc}
    \mathcal S^K(\robf^i):=\inf\{\mathcal H(Q^i|R^i)\;|\;Q\in\mathcal P(\Omega),\; Q^i_{j\frac{T}{K}}=\robf^i_j,\;j=0,\cdots,K \}
\end{equation}
as well as the interaction term and the final cost
\begin{equation}
    \label{CostDisc}
    \mathcal F^K(\robf^1,\cdots,\robf^N)=\dfrac{T}{K}\sum_{k=1}^{K-1}\sum_{\stackrel{1 \leq i,j \leq N}{j \neq i}} \int_{\R^d \times \R^d}\frac 12\bigg(\vij(x-y)+\vji(x-y)\bigg) \dd \robf^i_k \otimes\dd \robf^j_k+ \sum_{i=1}^N  \int_{\R^d} g^i(x) \dd\robf_K^i,
\end{equation}
where, we recall, $\robf^i$ stands now for a vector of measures, that is $\robf^i\in\Pb_2(\mathbb R^d)^{(K+1)}$ which discretize a curve of measures. 
Notice that \eqref{EntDisc} can be equivalently reformulate as a classical multi-marginal problem; that is for $i=1,..,N$ we have
\begin{equation}
    \label{EntDiscStat}
    \mathcal S^K(\robf^i):=\inf\{\mathcal H(\pi^i_K|R^i_K)\;|\;\pi^i_K\in\Pi(\robf_0^i,\cdots,\robf^i_K)\},
\end{equation}
where $\Pi(\robf_0^i,\cdots,\robf^i_K)$ is the set of probability measures on $(\mathbb R^d)^{K+1}$ having $\robf_0^i,\cdots,\robf^i_K$ as marginals and
\[R^i_K:=(e^{0},e^{\frac{T}{K}},\cdots,e^T)_\sharp R^i. \]
Then the discretized \eqref{var2} takes the form
\begin{equation}
    \label{varDisc}
    \inf\{\sum_{i=1}^N \mathcal S^K(\robf^i)+\mathcal F^K(\robf^1,\cdots,\robf^N)\;|\; (\robf^1,\cdots,\robf^N)\in\mathcal T^K\}.
\end{equation}

\begin{thm} As $K\to +\infty$,
\[\sum_{i=1}^N \ec^K (\robf^i) + \mathcal F^K(\robf^1,\dots, \robf^n)  \stackrel{\Gamma}{\to} \sum_i^N \ec(\rho^i)+ \mathcal{F} (\rho^1, \dots, \rho^N). \]
\end{thm}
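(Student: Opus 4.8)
The plan is to establish the two inequalities defining $\Gamma$-convergence with respect to the stated convergence $\robf_K \to \rho$ (uniform convergence of the piecewise constant interpolants in $\W_2$): the $\liminf$ inequality along every converging sequence, and the existence of a recovery sequence attaining the $\limsup$ bound. Throughout write $t_k = kT/K$ for the grid points. Since the index $i$ ranges over a finite set, and $\mathcal F^K,\mathcal F$ couple the components only through bounded-below lower semicontinuous interactions, it suffices to treat the energy contribution $\ec^K(\robf^i)$ component by component and the cost $\mathcal F^K$ separately.

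For the recovery sequence I would simply sample the limit curve, setting $\robf^i_k := \rho^i_{t_k}$. As each $\rho^i \in \C([0,T];(\Pb_2,\W_2))$ is uniformly continuous, the associated piecewise constant curve $\ropc^i$ satisfies $\sup_t \W_2(\ropc^i(t),\rho^i_t)\to 0$, so $\robf_K \to \rho$. For the energy, let $v^i$ realize $\ec(\rho^i)$; its restriction to each $[t_k,t_{k+1}]$ is admissible in the definition of $\ec_{T/K}(\rho^i_{t_k},\rho^i_{t_{k+1}})$, whence $\ec_{T/K}(\rho^i_{t_k},\rho^i_{t_{k+1}}) \le \tfrac12\int_{t_k}^{t_{k+1}}\int|v^i_t|^2\,d\rho^i_t\,dt$; summing over $k$ gives $\ec^K(\robf^i)\le\ec(\rho^i)$ for every $K$. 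For the cost, the interaction part of $\mathcal F^K$ is a Riemann sum of $t\mapsto\int V\,d\rho^i_t\otimes d\rho^j_t$, which is continuous by continuity of the interaction together with assumption (A4), so the sums converge to the integral; the terminal term is exactly $\sum_i\int g^i\,d\rho^i_T$ since $\robf^i_K=\rho^i_T$. This yields the $\limsup$ inequality (and, combined with the $\liminf$ below, even convergence along the recovery sequence).

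For the $\liminf$ inequality, take $\robf_K\to\rho$ and assume the functional is bounded along the sequence (otherwise nothing to prove). Concatenating, on each $[t_k,t_{k+1}]$, an optimal Fokker--Planck flow for $\ec_{T/K}(\robf^i_k,\robf^i_{k+1})$ produces an absolutely continuous curve $\tilde\rho^i_K$ with $\tilde\rho^i_K(0)=\rho^i_0$, $\tilde\rho^i_K(t_k)=\robf^i_k$, and $\ec(\tilde\rho^i_K)\le\ec^K(\robf^i)$. The assumed energy bound, via the identity relating $\int|v|^2\,d\rho$ to the Fisher information and the boundary entropies (Lemma 4.1 of \cite{BenCarDimNen}), yields equi-absolute-continuity of $\{\tilde\rho^i_K\}$ and hence compactness; any limit agrees with $\rho^i$ because the grid values $\robf^i_k$ converge to $\rho^i_{t_k}$ and the grid becomes dense. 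Lower semicontinuity of $\ec$ then gives $\ec(\rho^i)\le\liminf_K\ec^K(\robf^i)$. For the cost, the interaction part of $\mathcal F^K$ equals $\int_{T/K}^T\int V\,d\ropc^i(t)\otimes d\ropc^j(t)\,dt$; since $(\mu,\nu)\mapsto\int V\,d\mu\otimes\nu$ is lower semicontinuous ($V$ lsc and bounded below) and $\ropc^i(t)\to\rho^i_t$ uniformly, Fatou's lemma gives the $\liminf$ bound, and the terminal term converges by continuity of $g^i$. Summing over $i$ closes the inequality.

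The main obstacle is the $\liminf$ for the energy: the minimizers defining $\ec_{T/K}$ solve a Fokker--Planck rather than a pure continuity equation, so the interpolants $\tilde\rho^i_K$ carry the extra diffusive drift $\tfrac12\nabla\log\rho$, and the metric-derivative control needed for compactness involves the integrated Fisher information rather than only $\int|v|^2\,d\rho$. Securing equicontinuity and a clean lower semicontinuity statement for $\ec$ in this diffusive setting is exactly where the finiteness guaranteed by Lemma 4.1 of \cite{BenCarDimNen} is indispensable. A subsidiary technical point is that the stated notion of convergence constrains only the nodes $\robf^i_0,\dots,\robf^i_{K-1}$ through $\ropc^i$, so one must separately check that the terminal node $\robf^i_K$ converges to $\rho^i_T$ in order to control the final cost in the $\liminf$ step.
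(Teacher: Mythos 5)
Your $\Gamma$--$\liminf$ argument is essentially the paper's: interpolate the discrete data by almost-optimal Fokker--Planck flows, get compactness from the uniform energy bound, identify the limit curve with $\rho^i$, and treat the interaction via lower semicontinuity of $(\mu,\nu)\mapsto\int V\,\dd\mu\otimes\dd\nu$ plus Fatou (the paper implements the compactness/lower-semicontinuity step through total-variation bounds on $(\roam_K,\mam_K)$ and Theorem 2.34 of \cite{ambrosio2000functions} rather than metric equicontinuity, but the substance is the same). Your side remark about controlling the terminal node separately is also legitimate.

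The $\limsup$ part, however, contains a genuine gap, exactly where you depart from the paper. You sample the limit curve at the grid points $t_k=kT/K$ and justify convergence of the Riemann sums by asserting that $t\mapsto\sum_{i\neq j}\int V^{i,j}(x-y)\,\dd\rho^i_t\otimes\dd\rho^j_t$ is \emph{continuous} ``by continuity of the interaction together with (A4)''. But $V^{i,j}$ is only assumed positive and lower semicontinuous --- the paper's own examples are $120\,\chi_{\{|x-y|<0.2\}}$ and a truncated Coulomb potential --- so along a $\W_2$-continuous curve this map is only \emph{lower} semicontinuous in $t$ (Portmanteau gives $\liminf_{s\to t}\geq$ at every $t$ and nothing more); assumption (A4) concerns continuity and boundedness of $x\mapsto (V^{i,j}*\rho)(x)$ for a fixed nice $\rho$, not continuity in time. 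For a merely lsc integrand, left-endpoint Riemann sums need not converge to the integral: they can overshoot in the limit if the integrand happens to be large on a small set containing all the grid points, so your candidate recovery sequence can violate the $\limsup$ inequality. This is precisely why the paper does \emph{not} sample at the grid points: Remark \ref{sumint} guarantees, for any lsc $g\geq 0$, the existence of sample times $\xi^K_j\in\big[jT/K,(j+1)T/K\big)$ whose Riemann sums do converge to $\int_0^T g(t)\,\dd t$, and the recovery sequence is defined by $\robf^i_j=\rho^i(\xi^K_j)$, with $\xi^K_0=0$ and $\xi^K_K=T$ so that the initial constraint and the terminal cost are exact. Your energy estimate $\ec^K(\robf^i)\leq\ec(\rho^i)$ by restricting the optimal drift is the same as the paper's and is unaffected by this re-sampling; it is only the interaction term that forces the non-uniform choice of sampling times, and that choice is the idea missing from your proof.
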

\begin{proof} {\bf $\Gamma-\limsup$ inequality}
Let $\rho\in \ac_0$ be such that 
\[\sum_i \ec (\rho^i) + \mathcal{F} (\rho) < +\infty. \]
We consider the discretization $\robf_K$ of $\rho$ given by $\robf _j= \rho(\xi_j^K) $ where the times $\xi_j^K \in \bigg[j\frac{T}{K}, (j+1)\frac{T}{K}\bigg)$ for $j=1,\dots, K-1$ are chosen according to Remark \ref{sumint} below, $\xi_K^0=0$ and $\xi_K^K= T$ . 
Since $t \mapsto \rho_t$ is uniformly continuous, the convergence, 
\[\robf_K \to \rho,\]
is verified. 
We check the convergence of each term of the functional, starting with $\mathcal F ^K$.
Since $t \mapsto \rho_t$ is $\W_2$ continuous, it is $w*$ continuous and since $(x,y)\mapsto V(x-y)$ is lower semi-continuous, the same holds for 
\[t \mapsto \sum_{\stackrel{1 \leq i,j\leq N}{i\neq j}} \frac 12\bigg(\vij(x-y)+\vji(x-y)\bigg) \dd\rho^i_t(x) \otimes \dd\rho^j_t (y).\]
And this last map can be considered as $g$ in Remark \ref{sumint}.
By the choice of discretizing times
\[\int g^i(x) \dd\robf_k^i = \int g^i(x) \dd\rho^i(T,x). \]
Concerning the energy term, it is enough to study the convergence for each $i$. In the addendum $\ec_{\frac{T}{K}}(\robf^i_k,\robf^i_{k+1})$ we may take $\rho\bigg(t-k\frac{T}{K}\bigg)$  and the corresponding optimal vector field $v$ as test function so obtaining
\[\ec^K (\robf^i_K)=\sum_{k=0}^{K-1} \ec_{\frac{T}{K}}(\robf^i_k,\robf^i_{k+1})\leq 
\ec (\rho^i),\]
which, passing to the $\limsup$ as $K \to +\infty$ concludes the proof of the $\Gamma- \limsup$ inequality. 

{\bf $\Gamma-\liminf$ inequality} Let $\rho\in \ac_0$ and let $\robf_K \to \rho.$
We have 
\[\frac{T}{K}\sum_{k=0}^{K-1} \sum_{\stackrel{1 \leq i,j\leq N}{i\neq j}}\int \frac 12\bigg(\vij(x-y)+\vji(x-y)\bigg) \dd \robf_k^i \otimes \dd\robf_k^j = \int_0^T \sum_{\stackrel{1 \leq i,j\leq N}{i\neq j}} \int \frac 12\bigg(\vij(x-y)+\vji(x-y)\bigg) \dd \ropc_t^i \otimes \dd\ropc_t^j \dd t.\]
The very definition of convergence $\robf_K \to \rho$ implies that $\ropc_t^i \otimes \ropc_t^j \stackrel{*}{\rightharpoonup} \dd\rho_t^i\otimes\dd\rho_t^j$ for all $t,i,j$.
Since $V$ is lower semi-continuous 
\[\mu\otimes \nu \mapsto \int V(x-y) \dd \mu \otimes\dd\nu, \]
is $w*$ lower semi-continuous, by the lower semi-continuity of the integral 
\[\liminf_{K\to +\infty} \int_0^T \sum_{\stackrel{1 \leq i,j\leq N}{i\neq j}} \int \frac 12\bigg(\vij(x-y)+\vji(x-y)\bigg) \dd \ropc_t^i \otimes\dd \ropc_t^j \dd t.\]
Concerning the last addendum of the functional 
\[\int g^i(x) \dd\robf_K^i \to \int g^i(x) \dd \rho^i,\]
since $g^i$ is continuous and bounded by assumptions. So let us look at the energy term $\ec^K$. This term may be studied as in \cite{BenCarDimNen} by usual methods in Calculus of Variations.
Also in this case it is enough to study $\ec^K(\robf^i)$. Since we may assume that 
the total energy of $\robf_K$ is bounded, the same holds for $\ec^K(\robf^i)$.
Let $\roam_K,\ \vam_K$ be an almost minimizer for $\ec^K(\robf^i)$, i.e. 
$\roam_K :[0,T]\to \Pb_2$
is absolutely continuous, $\roam_K \bigg(j\frac{T}{K}\bigg)= \robf^i_j$ for $j=0,\dots,K$
$\vam_{K,t} \in L^2_{\roam_t}$,
\[\partial_t \roam_K -\frac{1}{2}\Delta \roam_K + \Div \vam_K \roam_K=0\]
and
\[C \geq\ec^K (\robf_K ^i) \geq \int _0^T \int \frac{|\vam_k|^2}{2}\dd\roam_{K,t} \dd t -\frac{1}{K}.\]
We estimate the total variation of the measure  $\mam_K:= \vam_K\roam_K$ in $[0,T]\times \R^d$.
Using 
\[C \geq \int _0^T \int \frac{|\mam_K|^2}{2 \roam_K} \dd x\dd t,\]
and the Young's inequality we obtain 
\[\|\mam_K\| _{[0,T]\times \R^d} \leq C+ \|\roam\|_{[0,T]\times \R^d}.\]
Since $\{\roam_K\}$ is also bounded in total variation we have that, up to subsequences 
\[(\roam_K, \mam_K) \stackrel{*}{\rightharpoonup} (\rho, m), \]
with $(\rho,m)$ solution of 
\[\partial_t \rho_t -\frac{1}{2}\Delta \rho_t + \Div m_t=0.\]
Since $m=v \rho$ for a suitable $v$ and applying the lower semi-continuity Theorem 2.34 of \cite{ambrosio2000functions}, originally from \cite{buttazzo1991functionals}, we have,
\begin{multline*}
\liminf_{K\to +\infty} \ec^K (\robf_K ^i) \geq 
\liminf_{K\to +\infty}\int _0^T \int \frac{|\vam_k|^2}{2}\dd\roam_{K,t} dt = \\ \liminf_{K\to +\infty}\int _0^T \int \frac{|\mam_k|^2}{2\roam_{K,t}}\dd x\dd t 
\geq \int _0^T \int \frac{|m|^2}{2\rho_t}\dd x\dd t \geq \ec(\rho^i).
\end{multline*}
\end{proof}

\begin{rmk}\label{sumint} Let $g:[0,T ] \to [0,+\infty)$ be a lower-semicontinuous functions and let $K\in \N$. There exists points $\xi_K^j \in \bigg[j\frac{T}{K}, (j+1)\frac{T}{K}\bigg)$ such that
\[\frac{T}{K} \sum_{j=1}^{K-1}g(\xi_K^J) \to \int_0^T g(t)\dd t. \]

\end{rmk}




\section{Numerical Approximation}
\label{sec:numerical_appx}
We now present a numerical scheme in order to solve the discretized in time problem \eqref{varDisc}. In particular the scheme is based on a variant of the Sinkhorn algorithm, successfully used to solve many variational problems involving optimal transport \cite{CuturiSinkhorn,benamouetalentropic,benamou2016numerical,benamou2017generalized,peyregradientflows,peyre2017computational,Chizat,thesislulu} and it is an adaptation of the scheme introduced in \cite{BenCarDimNen,barilla2021mean}. More numerical schemes for multi-population MFG have beed developed, for example, in \cite{carlini2018fully} based on some previous works of the same authors.

 We recall that \eqref{varDisc} reads as:
\[ \inf\left\{\sum_{i=1}^N \mathcal S^K(\robf^i)+\mathcal F^K(\robf^i,\cdots,\robf^i)\;|\; (\robf^i,\cdots,\robf^i)\in\mathcal T^K\right\}\]
where $\mathcal S^K$ is itself defined by   \eqref{EntDiscStat} which is an entropy minimization with multi-marginal constraints.

Denoting    $P^k:(\R^d)^{K+1}\rightarrow(\R^d)$  the $k-$th canonical projection we can obviously rewrite \eqref{varDisc}  as  an optimization problem over plans $\pi^i_N$ only:
\begin{multline}
\label{primalEnt}
 \inf \left\{ \sum_{i=1}^N \mathcal H(\pi^i_K\vert R_K^i) +i_{\rho^i_0} (P_\#^0\pi^i_k)+G(P^K_{\#}\pi^i_K)\right.\\
 \left.+\mathcal F^K(\bm{P_{\#}\pi^1_K},\cdots,\bm{P_{\#}\pi^N_K} )\;:\;(\pi^i_K)_{i=1}^N\in (\mathcal P((\R^d)^{K+1}))^N \vphantom{\frac12}\right\} ,     
\end{multline} 
 where $\bm{P_{\#}\pi^i_K}=(P^0_{\#}\pi^i_K,\cdots,P^K_{\#}\pi^i_K)$, $G$ is the final cost and is of the form $G(\rho)=\int g\dd\rho$, and 
 \[i_{\rho_0}(\rho)= \begin{cases} 0 \mbox{ if $\rho=\rho_0$}\\ +\infty \mbox{ otherwise} \end{cases}\]
  is the indicator function in the convex analysis sense and is used to enforce the initial  condition.
  We recall that for all $i=1,\cdots,N$, the static reference measure  $R^i_K$ in the relative entropy term is defined as follows
  \[R^i_K:=(e^{0},e^{\frac{T}{K}},\cdots,e^T)_\sharp R^i. \]
Moreover, since we are considering the reversible Wiener measure, it turns out that $R^i_K$ can be decomposed by using the heat kernel as
\[R^i_K:=\bigg(\prod_{k=1}^K H_{\frac{T}{K}}(x_k-x_{k-1}) \bigg)\dd\rho_0(x_0),\cdots \dd x_K, \]
where \[H_{t}(z):=\dfrac{1}{(2\pi t)^d}\exp{\bigg(-\dfrac{|z|^2}{2t}\bigg)},\;t>0,\; z\in\R^d,\]
and $|\cdot|$ denotes the standard euclidean norm.
We also need a discretization in space, for instance we use a
$M$ grid points to discretize $\R^d$, then $\pi^i_K$ and 
$R_K^i$ become tensors in $\R^{MN}$. For sake of simplicity 
we will keep the continuous in space notation, but form now
on integral must be understood as finite sums and
$x_0,\cdots,x_K$ as $M$ vectors.
Notice that thanks to the euclidean norm, the heat kernel 
$H_t(z)$ can be decomposed as a product along the  dimension of the one dimensional kernel,
that is \[H_t(z)=\prod_{j=1}^dh_t(z_j),\]
where $h_t(z_j)$ is the heat kernel in dimension one. This implies that, instead 
of storing a matrix $H_t\in\R^{M\times M}$, one can just store
$d$ small matrices belonging to $\R^{\sqrt[d]{M}\times\sqrt[d]
{M}}$.
One can now try to generalize the algorithm introduced in \cite{Chizat} and its multi-marginal variant \cite{benamouetalentropic} to the multi-population case in the same flavour as \cite{barilla2021mean}. However, since the interaction term between populations is non-convex, it happens that we are out of the domain of application of Sinkhorn algorithm. 
A way to overcome this difficulty is through a semi-implicit approach in order to treat the interaction term, that is at step $n+1$ the $i-$th plan $\pi^{i,(n+1)}_K$ is computed as the optimal solution of a linearized problem obtained by injecting the $j-$th, with $j\neq i$, plans $\pi^{j,(n)}_K$ computed at the previous step: for all $i=1,\cdots,N$
\begin{multline}
\label{linearized_Sink}
 \pi^{i,(n+1)}_K:=\argmin_{\pi^i_K\in \mathcal P((\R^d)^{K+1})} \left\{ \mathcal H(\pi^i_K\vert R_K^i) +i_{\rho^i_0} (P_\#^0\pi^i_k)+G(P^K_{\#}\pi^i_K)\right.\\
  \left.+\mathcal F^K_i(\bm{P_{\#}\pi^{i}_K})\vphantom{\frac12}\right\}  ,
\end{multline}
where 
\[\mathcal F^K_i(\bm{P_{\#}\pi^{i}_K}):= \mathcal F^K(\bm{P_{\#}\pi^{1,(n+1)}_K},\cdots,\bm{P_{\#}\pi^{i}_K},\cdots,\bm{P_{\#}\pi^{N,(n)}_K} )\]
We now have to solve $N$ finite-dimensional strictly convex minimization problems. 
Then, for each problem strong duality holds and \eqref{linearized_Sink} cab be re-written as follows
\begin{multline}
    \label{lin_dual_Sink}
    \sup_{(u_0^i,\cdots,u_K^i)}-F_0^*(-u^i_0)-G^*(-u_K^i)-\mathcal F^*_i(-u^i_1,\cdots,-u^i_{K-1})\\
    -\int\bigg (\exp{(\oplus_{j=0}^Ku_j^i)}-1 \bigg)R_K^i,
\end{multline}
where with a slight abuse of notation $F^*_i$ denotes the sum of the Fenchel-Legendre transform of each term in $\mathcal F_i^K$.
Denoting by $\pi^{i,(n+1)}_K$ and $u^{i,(n+1)}_j$ the optimal solution to \eqref{linearized_Sink} and \eqref{lin_dual_Sink}, respectively, it follows that the unique solution to \eqref{linearized_Sink} has the form
\[ \pi^{i,(n+1)}_K(\bm x):=\bigg(\otimes_{k=0}^Ke^{u^{i,(n+1)}_k(x_k)}\bigg)R^i_K(\bm x),  \]
where $\bm x=(x_0,\cdots,x_K)$.
\begin{rmk}[Structure of the optimal solution]
By definition of the linearized term $\mathcal F_i^K$ it follows that the $\mathcal F^*_i$ is just a sum of indicator function in the convex analysis sense; this implies that for all $k=1,\cdots,K-1$
\[u^{i,(n+1)}_k(x_k)=\sum_{j\neq i}\int V^{i,j}(x_k-y_k)\rho^{j,(n)}_{k}(y_k)\dd y_k,\]
where $\rho^{j,(n)}_{k}(y_k):=P_{k,\sharp}\pi_K^{j,(n)}$ is the marginal of the solution computed at the previous step. In the same way if the final cost if of the form $G(\rho)=\int g\dd\rho$ then $u^{i,\star}_K(x_K)=g^i(x_K)$. For sake of clarity we consider always these kinds of functional, even if the algorithm can be defined with more complex functional (we refer the reader to \cite{benamouetalentropic,barilla2021mean} for some examples with 1 or 2 populations; the extension to $N$ populations is then straightforward).
\end{rmk}
Notice that thanks to the remark above the generalised Sinkhorn algorithm  takes now the following form. \\
\begin{algorithm}
    \caption{Multi-population Sinkhorn}\label{algo}
    \begin{algorithmic}[1]
     \Require $u^{i,(0)}_k=0$
     \While{$\sum_{i=1}^N||\rho_0^{i,(n)}-\rho^{i}_0||<$tol}
     \For{$i=1 : N$} 
     \For{$k=0:K$}
     \If{$k=0$}
    \State $u_0^{i,(n+1)}=\log(\rho^i_0)-\log\bigg(\int\bigg (\exp{(\oplus_{j=1}^Ku_j^{i,(n)})} \bigg)R_K^i\bigg)$
    \ElsIf{$k\neq 0,K$}
    \State $u_K^{i,(n+1)}=\sum_{j\neq i}\int V^{i,j}(x_k-y_k)\rho^{j,(n)}_{k}(y_k)dy_k$
    \ElsIf{$k=K$}
    \State $u_K^{i,(n+1)}=g^i$
    \EndIf
    \EndFor
     \EndFor
     \EndWhile
    \end{algorithmic}\label{alg: pseudo-code}
\end{algorithm}

In the following numerical results we  take a space $M\times M$ discretization of $[0,1]^2$ with $M=100$ and a time discretization of $[0,1]$ with $K=32$ time step.
Let us, firstly, consider the 2 densities case: we have always considered the same initial data
\[\rho^1_0=\exp(-50(x^1-.2)^2-50(y^1-.5)^2),\quad \rho^2_0=\exp(-50(x^2-.8)^2-50(y^2-.5)^2)\] 
and the same final costs
\[ g^1=50((x^1-0.8)^2+(y^1-0.45)^2),\quad g^2=50((x^2-0.2)^2+(y^2-0.5)^2),\]
such that we expect the two Gaussians to switch position.
As for the interaction potential we have considered in Figure \ref{fig:support_strongV} a strong repulsion given by $V(x,y)=120\chi_{||x-y||<0.2}(x,y)$ and in Figure \ref{fig:support_truncatedCoulomb} a truncated Coulomb repulsion $V(x,y)=\min\bigg(1000,\frac{1}{||x-y||}\bigg)$. Notice in both cases the effect of entropic term  which oblige the densities to spread but at the same time the effect of the repulsive interaction forbid them to touch each other (the distance between them depends on the kind of the repulsion term).
\newline
\begin{figure}[h!]
    \TabFour{
    \includegraphics[width=.25\linewidth]{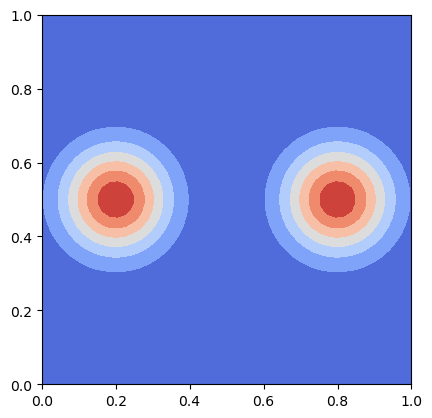}&\includegraphics[width=.25\linewidth]{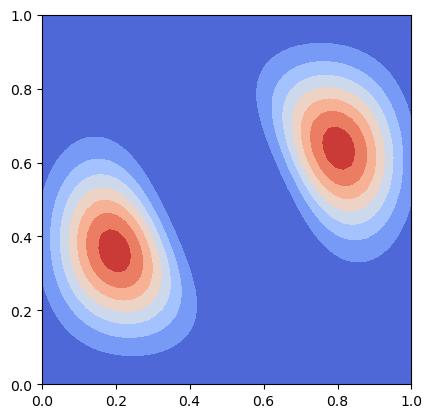}&\includegraphics[width=.25\linewidth]{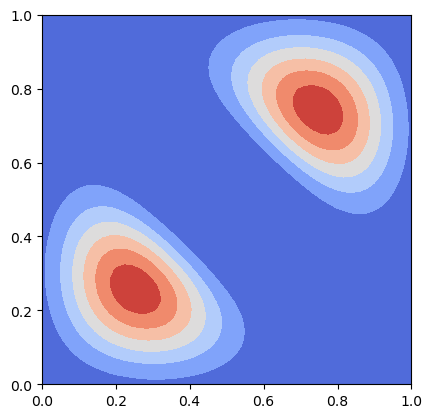}&\includegraphics[width=.249\linewidth]{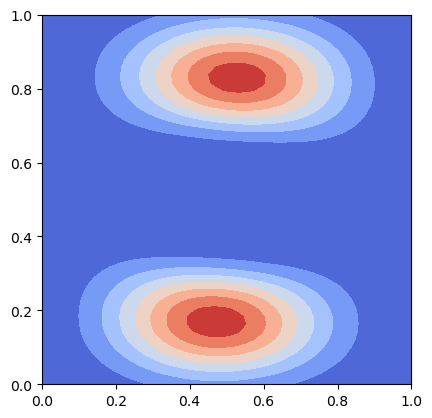}\\
    $t=0$&$t=1/8$&$ t=1/4$&$t=1/2$\\
    \includegraphics[width=.25\linewidth]{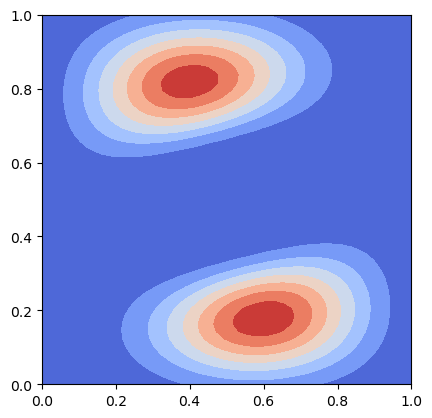}&\includegraphics[width=.25\linewidth]{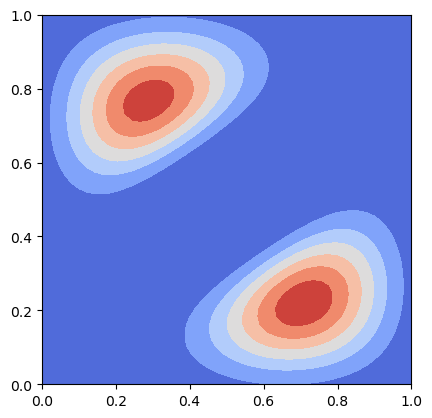}&\includegraphics[width=.25\linewidth]{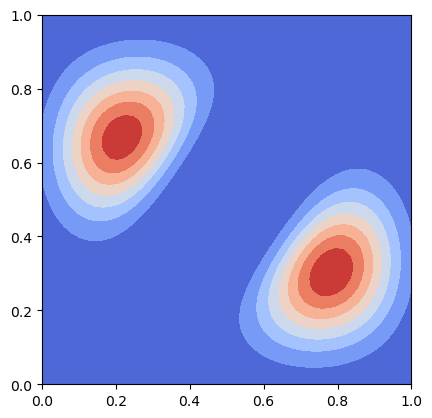}&\includegraphics[width=.249\linewidth]{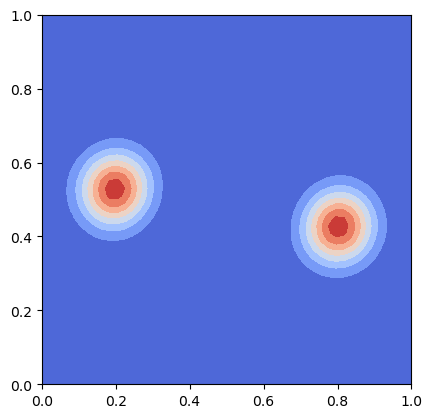}\\
    $t=5/8$&$t=3/4$&$ t=7/8$&$t=1$}

    \caption{Support of $\rho^1$ and $\rho^2$ for $V(x,y)=120\chi_{||x-y||<0.2}(x,y)$. }
    \label{fig:support_strongV}
\end{figure}
\begin{figure}[h!]
    \TabFour{
    \includegraphics[width=.25\linewidth]{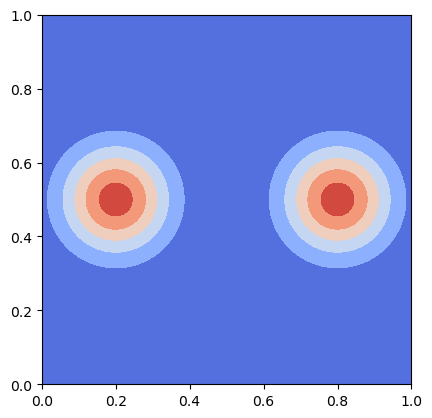}&\includegraphics[width=.25\linewidth]{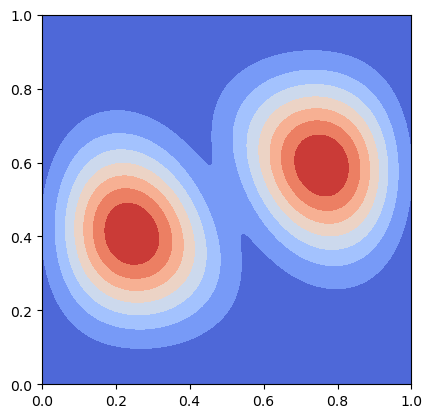}&\includegraphics[width=.25\linewidth]{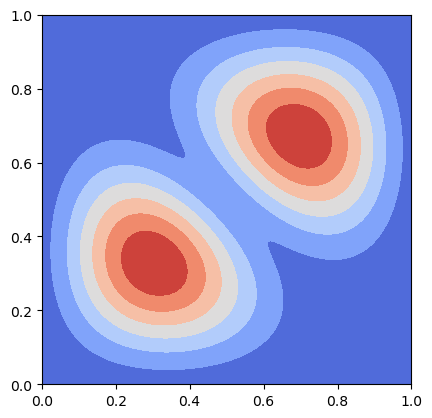}&\includegraphics[width=.249\linewidth]{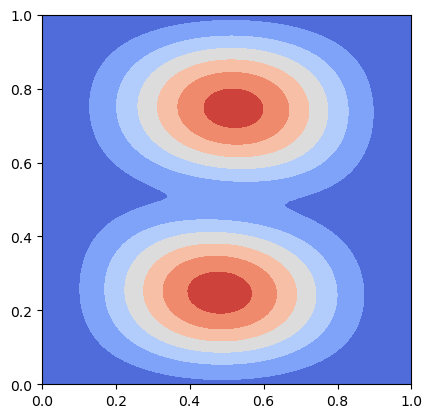}\\
    $t=0$&$t=1/8$&$ t=1/4$&$t=1/2$\\
    \includegraphics[width=.25\linewidth]{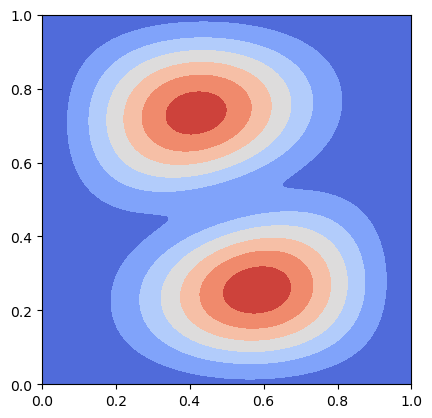}&\includegraphics[width=.25\linewidth]{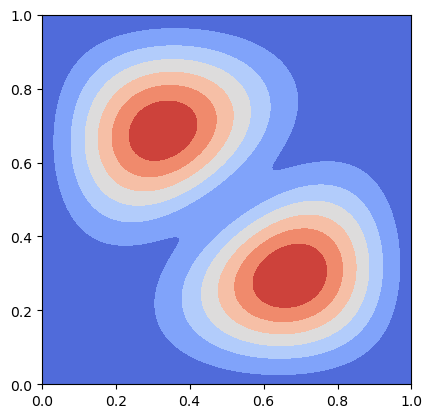}&\includegraphics[width=.25\linewidth]{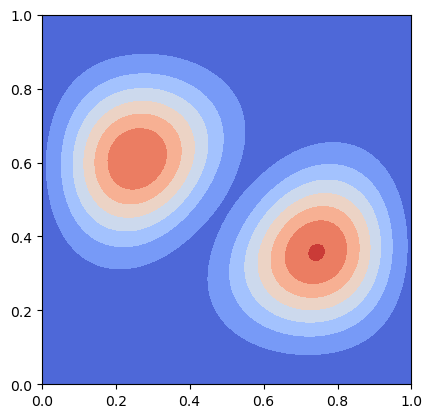}&\includegraphics[width=.249\linewidth]{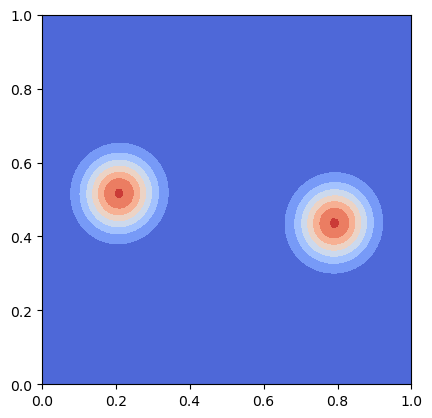}\\
    $t=5/8$&$t=3/4$&$ t=7/8$&$t=1$}

    \caption{Support of $\rho^1$ and $\rho^2$ for  $V(x,y)=\min\bigg(1000,\frac{1}{||x-y||}\bigg)$. }
    \label{fig:support_truncatedCoulomb}
\end{figure}

It is now straightforward to extend the  theory and the numerical method to a slightly more general model with a viscosity parameter $\eps$. The Mean Field Game system
(\ref{mfg}) now  takes the form~:

\begin{equation}\label{mfgeps}
\left\{ \begin{array}{l}
-\dt \ui-\frac{\varepsilon}{2} \Delta \ui +\frac{1}{2} |\nabla \ui|^2 = \sum_{j \neq i}\int_{\R^d} \vij(x-y)\roj_t \dd y, \\
\dt \roi_t -\frac{\varepsilon}{2} \Delta \roi_t -\Div (\nabla \ui \roi_t)=0,\\
\roi(0,x)=\roi_0(x),\;  \ui(T,x)= g^i(x);\\
\end{array}\right.
\end{equation}
The Lagrangian formulation \eqref{var} we have proposed becomes
\begin{equation}\label{vareps}
\min\{ J_{\varepsilon}(Q^1, \dots, Q^N) \ : \ e^0_\sharp Q^i= \rho_0^i \}
\end{equation}
where 
\begin{eqnarray*}
J_\varepsilon(Q^1, \dots, Q^N) &:= & \sum_i \en(Q^i | R_\varepsilon^i) + \sum_{\stackrel{1 \leq i \leq N}{j \neq i}} \int_0^T \int_{\R^d \times \R^d} V(x-y) \dd e^t_\sharp Q^i \otimes \dd e^t_\sharp Q^j  \dd t \\ 
& +&  \sum_i 
\int_{\R^d} g^i(x) \dd e^T_\sharp Q^i.
\end{eqnarray*}
where $R^i_\varepsilon$ are the reversible Wiener measure induced by a Brownian motion with variance $\varepsilon$.
Notice that one could choose different $\varepsilon$ for each population.
In particular, if we discretize the problem in time, we have that the reference measure $R^i_{\eps,K}$ can be still decomposed by using the heat kernel $H_{\eps t}(z)$.

Notice that we can still use the algorithm we have introduced above, but the performance, in terms of iterations to converge, will be affected by small values of $\varepsilon$. At least formally, when the viscosity is small, \eqref{vareps} is an approximation of the following Lagrangian formulation of first-order variational mean-field games (see  \cite{benamou2017variational} for the one population case)
\begin{equation}\label{firstordervareps}
\min\{ \mathcal K(Q^1, \dots, Q^N) \ : \ e^0_\sharp Q^i= \rho_0^i \}
\end{equation}
where 
\begin{eqnarray*}
\mathcal K(Q^1, \dots, Q^N) &:= & \sum_i K(Q_i) + \sum_{\stackrel{1 \leq i \leq N}{j \neq i}} \int_0^T \int_{\R^d \times \R^d} V(x-y) \dd e^t_\sharp Q^i \otimes \dd e^t_\sharp Q^j  \dd t \\ 
& +&  \sum_i 
\int_{\R^d} g^i(x) \dd e^T_\sharp Q^i.
\end{eqnarray*}
where 
\begin{equation}\label{defdeK}
K(Q):=  \frac{1}{2}  \int_{\Omega}  \int_0^T \vert \dot \omega(t)\vert^2  \mbox{d}t \mbox{d} Q(\omega).
\end{equation}
This also  implies that we can use the Sinkhorn algorithm, with small $\varepsilon$, in order to approximate the solution to first-order MFGs. In Figures \ref{fig:support_strongVeps} and \ref{fig:support_truncatedCoulombeps} we have considered the same data as above but with $\varepsilon=0.005$; notice now the effect of a weaker diffusion term which prevents the densities from spreading.
\begin{figure}[h!]
    \TabFour{
    \includegraphics[width=.25\linewidth]{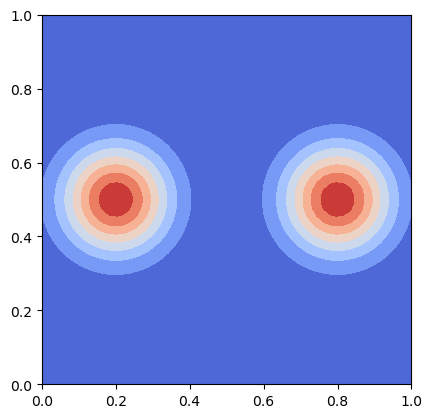}&\includegraphics[width=.25\linewidth]{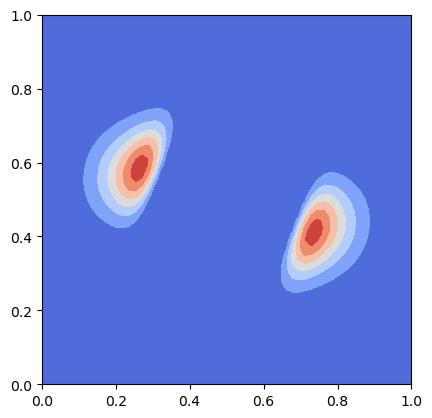}&\includegraphics[width=.25\linewidth]{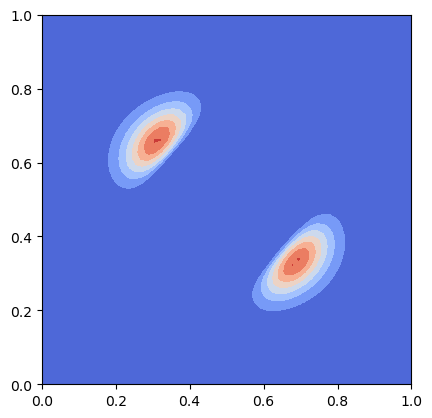}&\includegraphics[width=.249\linewidth]{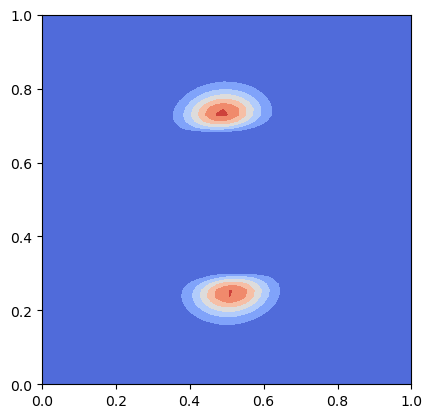}\\
    $t=0$&$t=1/8$&$ t=1/4$&$t=1/2$\\
    \includegraphics[width=.25\linewidth]{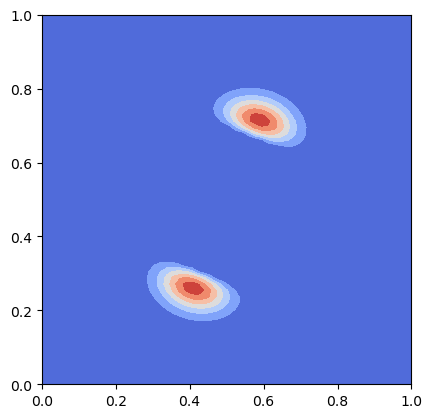}&\includegraphics[width=.25\linewidth]{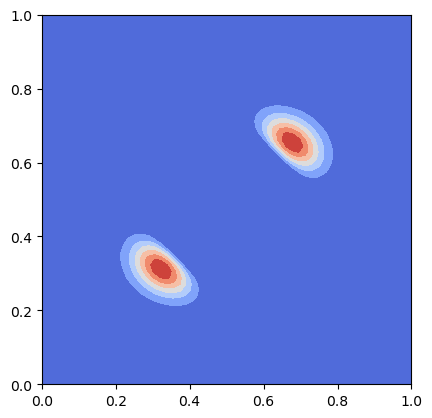}&\includegraphics[width=.25\linewidth]{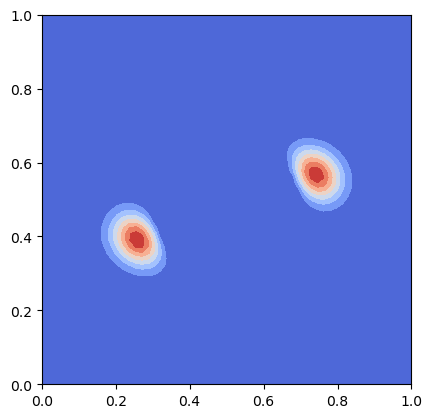}&\includegraphics[width=.249\linewidth]{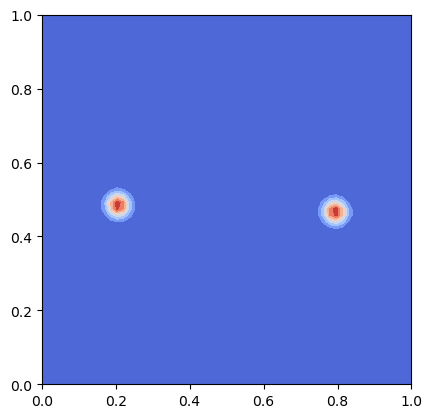}\\
    $t=5/8$&$t=3/4$&$ t=7/8$&$t=1$}

    \caption{Support of $\rho^1$ and $\rho^2$ for $\varepsilon=.005$ and $V(x,y)=120\chi_{||x-y||<0.2}(x,y)$. }
    \label{fig:support_strongVeps}
\end{figure}
\begin{figure}[h!]
    \TabFour{
    \includegraphics[width=.25\linewidth]{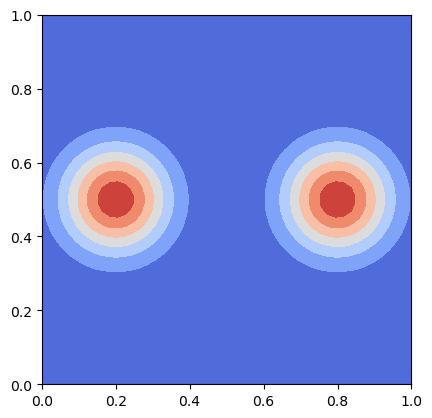}&\includegraphics[width=.25\linewidth]{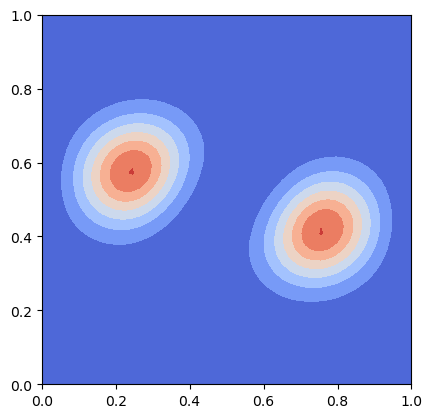}&\includegraphics[width=.25\linewidth]{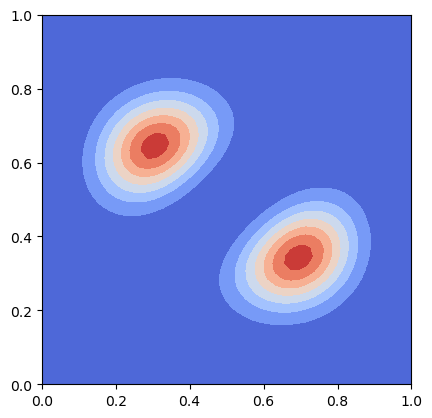}&\includegraphics[width=.249\linewidth]{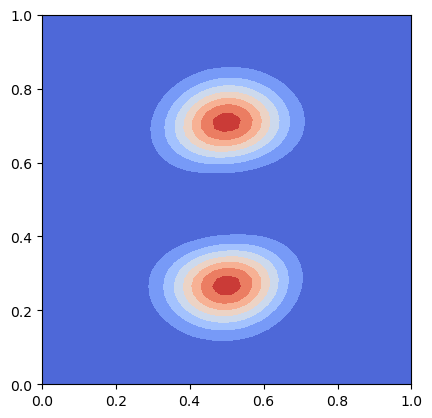}\\
    $t=0$&$t=1/8$&$ t=1/4$&$t=1/2$\\
    \includegraphics[width=.25\linewidth]{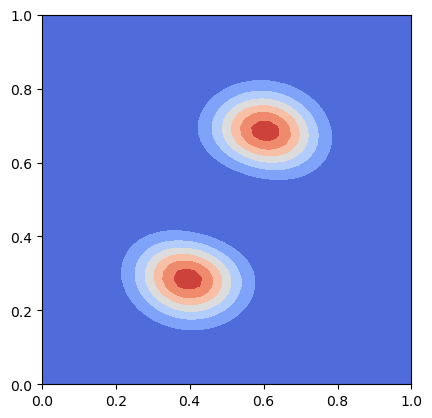}&\includegraphics[width=.25\linewidth]{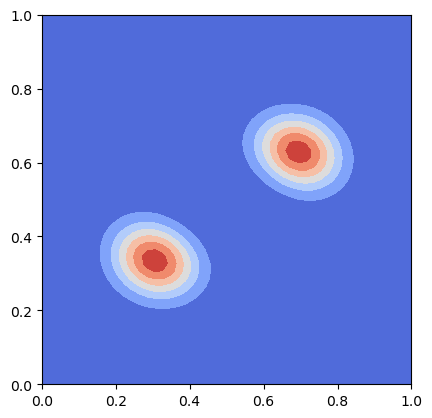}&\includegraphics[width=.25\linewidth]{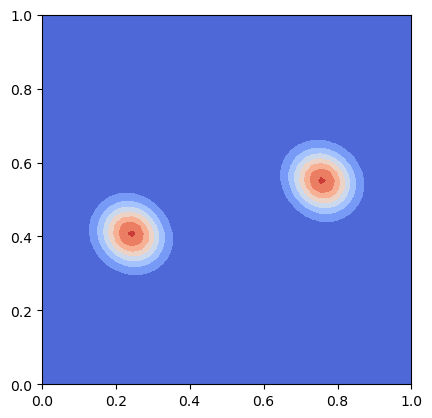}&\includegraphics[width=.249\linewidth]{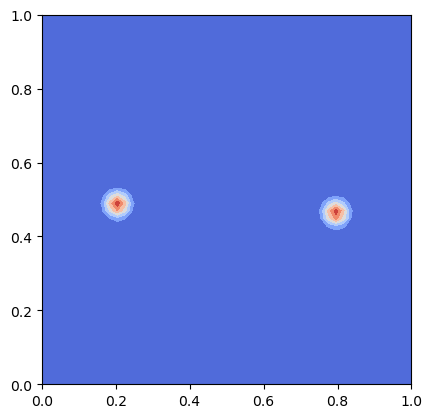}\\
    $t=5/8$&$t=3/4$&$ t=7/8$&$t=1$}

    \caption{Support of $\rho^1$ and $\rho^2$ for $\varepsilon=.005$ and $V(x,y)=\min\bigg(1000,\frac{1}{||x-y||}\bigg)$. }
    \label{fig:support_truncatedCoulombeps}
\end{figure}
Finally, we consider a 3 populations example with initial data
\[\rho^1_0=\exp(-50(x^1-.2)^2-50(y^1-.5)^2),\; \rho^2_0=\exp(-50(x^2-.8)^2-50(y^2-.5)^2),\]
\[\rho^3_0=\exp(-80(x^3-.5)^2-80(y^3-.1)^2),\] 
and final costs
\[ g^1=50((x^1-0.8)^2+(y^1-0.5)^2),\; g^2=50((x^2-0.5)^2+(y^2-0.1)^2),\;g^3=50((x^2-0.2)^2+(y^2-0.5)^2)\]
which induces a rotation of the populations. In Figure \ref{fig:support_strongV_3} we plot the evolution of  support  of the 3 densities considering as interaction term the strong repulsion we have taken above.
\begin{figure}[h!]
    \TabFour{
    \includegraphics[width=.25\linewidth]{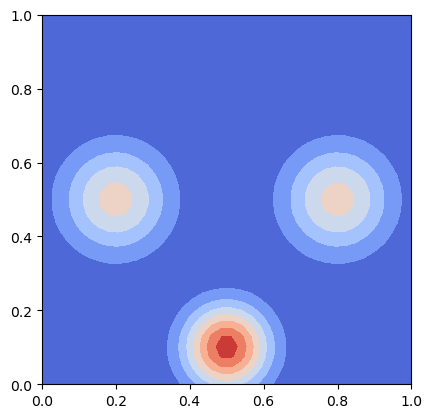}&\includegraphics[width=.25\linewidth]{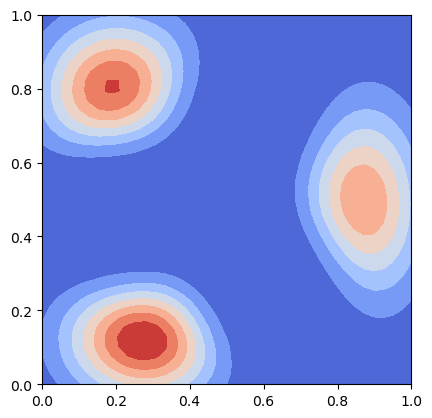}&\includegraphics[width=.25\linewidth]{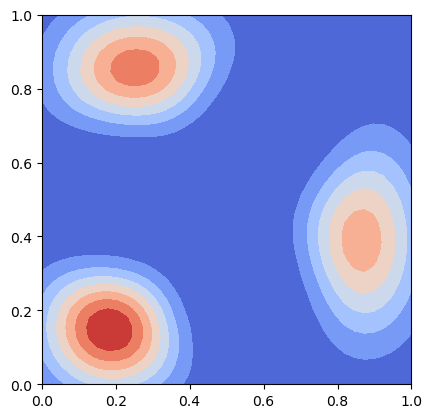}&\includegraphics[width=.249\linewidth]{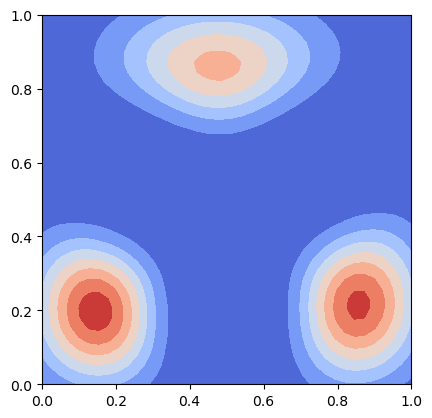}\\
    $t=0$&$t=1/8$&$ t=1/4$&$t=1/2$\\
    \includegraphics[width=.25\linewidth]{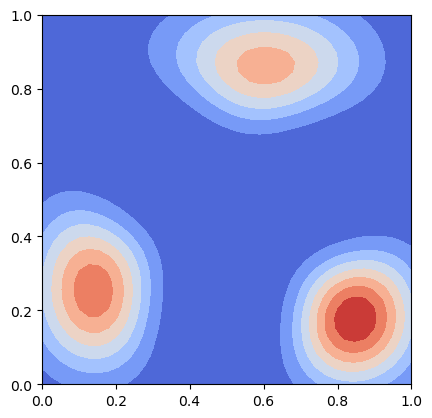}&\includegraphics[width=.25\linewidth]{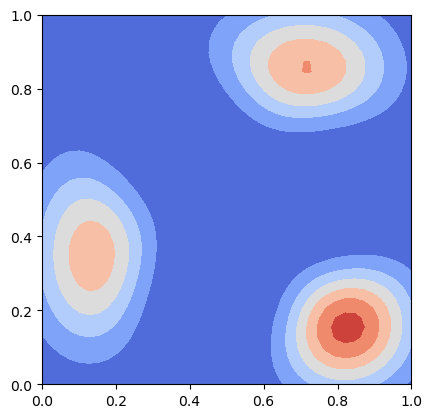}&\includegraphics[width=.25\linewidth]{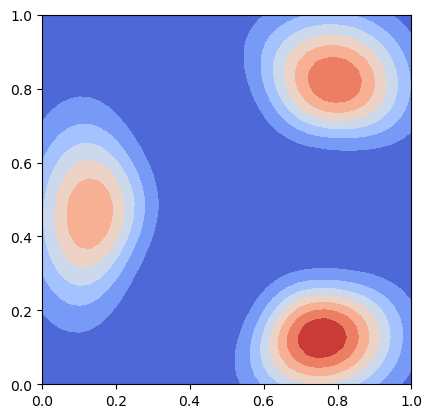}&\includegraphics[width=.249\linewidth]{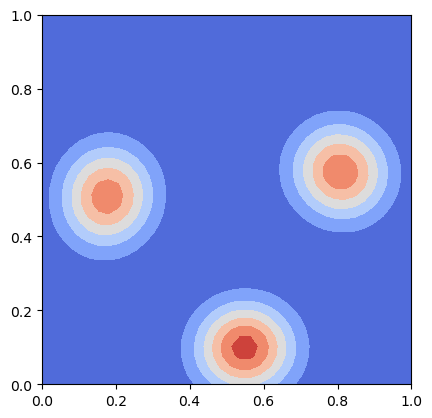}\\
    $t=5/8$&$t=3/4$&$ t=7/8$&$t=1$}

    \caption{Support of $\rho^1$, $\rho^2$ and $\rho^3$ for $V(x,y)=120\chi_{||x-y||<0.2}(x,y)$. }
    \label{fig:support_strongV_3}
\end{figure}

\appendix
\section{Entropy and  the De la Vall\'ee Poussin Theorem}
The existence of minimizers for $J$ relies on the compactness of minimizing sequences that follows from the super-linearity of the entropy functional. 
This is a classical fact which we shortly report here for the reader's convenience. 

Let $\mu$ and $\sigma$ be two probability measures on a metric space $X$. We say that $\mu$ is absolutely continuous with respect to $\sigma$ if there exists a function $f \in L^1_\sigma$ such that
\[
\mu= f (x) \sigma, 
\]
and in this case we write $\mu <<\sigma$ and we use the classical notation $f=\frac{\dd\mu}{\dd\sigma}$. 

The relative entropy of $\mu$ with respect to $\sigma$ is defined as 
\begin{equation*}
\en(\mu | \sigma)= \left\lbrace \begin{array}{ll}
\int_X \frac{\dd\mu}{\dd\sigma} \log \bigg(\frac{\dd\mu}{\dd\sigma}\bigg) \dd \sigma & \mbox{if} \ \mu << \sigma, \\
+ \infty & \mbox{otherwise.}
\end{array}
\right.
\end{equation*}
Following \cite{AmbGigSav} (Example 9.3.6) one can introduce the function
\begin{equation*}
H(s)=\left\lbrace \begin{array}{ll}
s(\log s -1)+1 & \mbox{if} \ s>0,\\ 
1&  \mbox{if} \ s=0,\\
+\infty & \mbox{if} \ s<0,
\end{array}
\right.
\end{equation*}
which is nonnegative, lower semi-continuous, strictly convex and super-linear at $+\infty$. 
Then it holds
\[\en(\mu | \sigma) = \int H\bigg(\frac{\dd\mu}{\dd\sigma}\bigg) \dd \sigma;
\]
and
\[ \en(\mu |\sigma)=0  \leftrightarrow \mu=\sigma.\]
This way to rewrite the relative entropy is handy to make a connection with the De la Vall\'ee Poussin theorem. In fact, 
let $\{\mu_i\}_{i \in I} \subset \Pb (X)$ be such that $\en (\mu_i | \sigma) < C$ then 
the family $\left\{\frac{\dd\mu_i}{\dd\sigma}\right\}_{i \in I} \subset L^1_\sigma (X)$ is weakly compact thanks to the the theorem
\begin{thm}[De la Vall\'ee Poussin]
Let $\{f_i\}_{i \in I} \subset L^1_\sigma$ then the following are equivalent
\begin{itemize}
\item the functions $\{f_i\}_{i \in I} $ are uniformly integrable (and then weakly compact in $L^1_\sigma$ by the Dunford-Pettis theorem,
\item there exists a function $\varphi: \R_+ \to \R_+$ non-decreasing and such that 
\[\lim_{t \to +\infty} \frac{\varphi(t)}{t}=+\infty,\] 
such that  $\int_X \varphi (|f_i|) d\sigma<C$.
\end{itemize}
\end{thm}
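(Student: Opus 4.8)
The plan is to prove the two implications separately, working with the characterisation of uniform integrability by tails,
\[\lim_{M\to+\infty}\ \sup_{i\in I}\int_{\{|f_i|>M\}}|f_i|\,\dd\sigma=0,\]
which is equivalent to the usual $\delta$--$\varepsilon$ formulation because $\sigma$ is a finite (probability) measure.

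For the implication from the existence of $\varphi$ to uniform integrability --- the easy direction --- I would fix $\varepsilon>0$ and use superlinearity: since $\varphi(t)/t\to+\infty$ there is $M>0$ with $t\le (\varepsilon/C)\,\varphi(t)$ for all $t\ge M$. Then for every $i$,
\[\int_{\{|f_i|>M\}}|f_i|\,\dd\sigma\le \frac{\varepsilon}{C}\int_{\{|f_i|>M\}}\varphi(|f_i|)\,\dd\sigma\le \frac{\varepsilon}{C}\,C=\varepsilon,\]
which yields the tail bound uniformly in $i$, hence uniform integrability (and then weak compactness by Dunford--Pettis).

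The forward direction is the substance of the statement, and this is where I expect the main difficulty to lie: one must manufacture a single superlinear $\varphi$ that keeps all the integrals $\int\varphi(|f_i|)\,\dd\sigma$ bounded at once. The idea is to read off the growth of $\varphi$ from the uniform decay rate of the tails. Concretely, using uniform integrability I would pick an increasing sequence of integers $0=c_0<c_1<c_2<\cdots\to+\infty$ with $\sup_i\int_{\{|f_i|>c_n\}}|f_i|\,\dd\sigma\le 2^{-n}$ for every $n\ge1$, set the step function $\psi(s):=\#\{n\ge1 : c_n\le s\}$ (non-decreasing, integer-valued, $\psi(s)\to+\infty$), and define $\varphi(t):=\int_0^t\psi(s)\,\dd s$. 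By construction $\varphi$ is non-negative, non-decreasing, convex with $\varphi(0)=0$, and $\varphi(t)/t\to+\infty$ since $\psi\to+\infty$.

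It remains to verify the uniform bound, which is a Tonelli / layer-cake computation. Writing $\varphi(|f_i|)=\int_0^\infty \psi(s)\,\mathbf 1_{\{s<|f_i|\}}\,\dd s$ and exchanging the order of integration, then expanding $\psi(s)=\sum_{n\ge1}\mathbf 1_{\{c_n\le s\}}$, I would obtain
\[\int_X\varphi(|f_i|)\,\dd\sigma=\sum_{n\ge1}\int_{c_n}^{\infty}\sigma(\{|f_i|>s\})\,\dd s=\sum_{n\ge1}\int_{\{|f_i|>c_n\}}(|f_i|-c_n)\,\dd\sigma,\]
the last equality being the layer-cake formula for $(|f_i|-c_n)_{+}$. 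Each summand is dominated by $\int_{\{|f_i|>c_n\}}|f_i|\,\dd\sigma\le 2^{-n}$, so $\int_X\varphi(|f_i|)\,\dd\sigma\le\sum_{n\ge1}2^{-n}=1$ for all $i$, giving the claim with $C=1$. The only delicate point is the first step of this last paragraph --- choosing the $c_n$ so that the series telescopes into a summable bound --- which is exactly where the uniformity of the tail decay is consumed.
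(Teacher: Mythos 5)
Your proof is correct, but there is nothing in the paper to compare it against: the theorem is quoted in the appendix as a classical fact, stated without proof, purely to justify the weak $L^1_\sigma$-compactness of families with bounded relative entropy. What you have written is the standard textbook proof of De la Vall\'ee Poussin's criterion, and it is complete. The easy implication follows, as you say, from the superlinear bound $t\le(\varepsilon/C)\,\varphi(t)$ for $t\ge M$ together with $\varphi\ge 0$. For the substantive implication, your construction --- thresholds $c_n\uparrow+\infty$ with $\sup_i\int_{\{|f_i|>c_n\}}|f_i|\,\dd\sigma\le 2^{-n}$, the counting function $\psi(s)=\#\{n\ge 1 : c_n\le s\}$ (finite-valued since $c_n\to+\infty$, and itself tending to $+\infty$), and $\varphi(t)=\int_0^t\psi(s)\,\dd s$ --- produces a convex, nondecreasing, superlinear $\varphi$, and the Tonelli/layer-cake identity
\[\int_X\varphi(|f_i|)\,\dd\sigma=\sum_{n\ge1}\int_{\{|f_i|>c_n\}}(|f_i|-c_n)\,\dd\sigma\le\sum_{n\ge1}2^{-n}=1\]
holds exactly as you computed, giving the uniform bound with $C=1$. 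One small caveat on your opening remark: the equivalence between the tail characterisation you adopt and the $\varepsilon$--$\delta$ formulation of uniform integrability also requires $L^1_\sigma$-boundedness of the family (on a single atom of mass one, $f_i\equiv i$ satisfies the $\varepsilon$--$\delta$ condition vacuously but not the tail condition); since the paper's statement couples uniform integrability with the Dunford--Pettis theorem, the tail form is the correct reading here, and nothing in your actual argument depends on that aside.
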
 
A second advantage of writing the entropy using the function $H$ is the lower semicontinuity with respect to the weak $L^1_\sigma$-convergence 
of the densities, in the sense that if 
\[ f_n \stackrel{w-L^1_\sigma}{\rightharpoonup} f,
\]
then 
\[\liminf_{n \to \infty} \int_X H(f_n) d \sigma \geq \int_X H(f) d \sigma .\]

\section*{Acknowledgment:}
The work of the first author is partially financed by the \textit{``Fondi di ricerca di ateneo''} of the University of Firenze and partially financed by the EU-Next Generation EU, (Missione 4, Componente 2, Investimento 1.1 \textit{Progetti di Ricerca di Rilevante Interesse Nazionale} (PRIN), CUPB53D23009310006 - (2022J4FYNJ). The first author is member of the research group GNAMPA of INdAM.
The second author is partially on academic leave at Inria (team Matherials) for the year 2023-2024 and acknowledges the hospitality of this institution during this period. His work  benefited from the support of the FMJH Program PGMO,  from H-Code, Université Paris-Saclay and from the ANR project GOTA (ANR-23-CE46-0001). 

Both author warmly thanks the hospitality of \textit{equipe MoKaPlan of the INRIA Paris}, where this paper has been written.


\bibliographystyle{plain}
\bibliography{bibli}





  



\end{document}